\journal{arXiv}
\begin{document}

\newtheorem{corollary}{Corollary}[section]
\newtheorem{theorem}{Theorem}[section]
\newtheorem{lemma}{Lemma}[section]
\newtheorem{proposition}{Proposition}[section]
\theoremstyle{remark}
\newtheorem{remark}{Remark}[section]
\newtheorem{example}{Example}[section]
\theoremstyle{definition}
\newtheorem{definition}{Definition}[section]
\newtheorem{notation}{Notation}[section]
\numberwithin{equation}{section}

\renewcommand{\theequation}{\thesection.\arabic{equation}}
\renewcommand{\thefigure}{\thesection.\arabic{figure}}
\renewcommand{\thetable}{\thesection.\arabic{table}}
\renewcommand{\theequation}{\thesection.\arabic{equation}}

\providecommand{\norm}[1]{\lVert#1\rVert}
\providecommand{\abs}[1]{\lvert#1\rvert}
\newcommand{\G}{\Gamma}

\newcommand{\vect}[1]{\mathbf{#1}}
\renewcommand{\t}{\tau}
\renewcommand{\o}{\omega}
\newcommand{\s}{\sigma}
\newcommand{\var}{\varphi}
\newcommand{\e}{\varepsilon}
\newcommand{\al}{\alpha}
\newcommand{\bt}{\beta}
\newcommand{\ba}{\begin{array} }
\newcommand{\ea}{\end{array} }
\newcommand{\be}{\begin{equation} }
\newcommand{\ee}{\end{equation} }
\newcommand{\baa}{\begin{align} }
\newcommand{\eaa}{\end{align} }
\newcommand{\da}{\delta}
\newcommand{\kal}{\kappa}
\newcommand{\Da}{\Delta}
\renewcommand{\l}{\lambda}
\newcommand{\laa}{\Lambda}
\newcommand{\f}{\displaystyle\frac}
\newcommand{\il}{\displaystyle\int\limits}
\newcommand{\li}{\lim\limits}
\newcommand{\ld}{\displaystyle\left\{}
\newcommand{\rd}{\displaystyle\right\}}
\newcommand{\lx}{\displaystyle\left(}
\newcommand{\rx}{\displaystyle\right)}
\newcommand{\n}{\noindent}
\newcommand{\va}{\vartheta}
\newcommand{\ga}{\gamma}
\newcommand{\cL}{{\mathscr{L}}}
\newcommand{\ma}{{\mathcal{A}}}
\newcommand{\mab}{{\tilde{\mathcal{A}}}}
\newcommand{\ra}{\rightarrow}
\newcommand{\op}{\oplus}
\newcommand{\p}{\bar{P}}
\newcommand{\pt}{\tilde{P}}
\newcommand{\qtt}{\tilde{Q}}
\newcommand{\q}{\bar{Q}}
\newcommand{\T}{{\mathbb{T}}}
\newcommand{\J}{{\mathbb{J}}}
\newcommand{\C}{{\mathbb{C}}}
\renewcommand{\O}{\Omega}
\newcommand{\R}{{\mathbb{R}}}
\newcommand{\A}{{\mathbb{A}}}
\newcommand{\Z}{{\mathbb{Z}}}
\newcommand{\I}{{\mathbb{I}}}
\newcommand{\N}{{\mathbb{N}}}
\newcommand{\B}{{\mathcal{B}}}
\newcommand{\X}{{\mathcal{X}}}
\newcommand{\Y}{{\mathcal{Y}}}
\newcommand{\W}{{\mathcal{W}}}
\newcommand{\pa}{\partial}
\newcommand{\crd}{{\rm C}_{\rm rd}}
\newcommand{\cf}{{\rm C}}
\newcommand{\del}{^{\Delta}}
\newcommand{\tx}[1]{\quad\mbox{#1}\quad}
\newcommand{\ove}[1]{\overline{#1}}
\newcommand{\vecc}[2]{\begin{bmatrix}#1\\#2\end{bmatrix}}
\newcommand{\veccc}[2]{\begin{bmatrix}#1\\[.5cm]#2\end{bmatrix}}
\newcommand{\tu}[1]{\textup{#1}}
\newcommand{\td}{\tilde}
\newcommand{\ve}{\varepsilon}
\newcommand{\iy}{\infty}
\newcommand{\ta}{\theta}
\newcommand{\om}{\ominus}

\begin{frontmatter}

\title{Existence and Stability of Positive Periodic Solutions to Functional Differential Equations}
\tnotetext[mytitlenote]{Supported by NSFC-11271065, RFPD-20130043110001, and NSF grant DMS-0077790.}


\author[nenu]{
{Meng Fan}}
\ead{mfan@nenu.edu.cn}

\author[asu]{
{Yang Kuang}}
\ead{kuang@asu.edu}

\author[asuwest]{
{Haiyan Wang}\corref{mycorrespondingauthor}}
\cortext[mycorrespondingauthor]{Corresponding author}
\ead{haiyan.wang@asu.edu}

\author[nenu]{Shaojiang Yu}
\ead{yusj353@nenu.edu.cn}

\address[nenu]{School of Mathematics and Statistics, Northeast Normal University, 5268 Renmin Street, Changchun, Jilin, 130024, P. R. China.}
\address[asu]{School of Mathematics and Statistics, Arizona State University, Tempe, AZ 85287, USA}
\address[asuwest]{School of Mathematical and Natural Sciences, Arizona State University, Phoenix, Arizona 85069, USA}

\begin{abstract}
We establish several delay-independent criteria for the existence and stability of positive periodic solutions of $n$-dimensional nonautonomous functional differential equation by several fixed point theorems.  Examples from positive and negative growth feedback systems demonstrate the results.

\end{abstract}

\begin{keyword}
Positive periodic solution, Delay differential equation, Existence, Fixed
point theorem, Asymptotic stability.
\end{keyword}

\end{frontmatter}


\section{Introduction}
The equation  of the form
\begin{equation}\label{eq2}
x'(t)=-a(t)x(t) + \l b(t)f(x(t-\tau(t)))
\end{equation}
and its generalizations have been proposed as models for a variety of population dynamics and
physiological processes such as production of blood cells
\cite{WAZLASTA} and other applications \cite{GURNAYBN,MACKEYGLASS}.   The existence and stability of periodic solutions of the equation of form \eqref{eq2} have attracted much attention. See, e.g., \cite{BI,CHENGZHANG,CHOW1,FREEDMANWU1992,KUANGBOOK,KUANGJAPAN,KUANGSMITH1,MALLETPARETNUSSBAUM,TANGKUANG1997,HYW,YE}.

Inspired by multiple-species ecological models, it is natural to
explore $n$-dimensional systems. Autonomous systems have been extensively studied. However, natural environment is physically highly variable and many processes are usually subject to seasonal fluctuations and vary greatly in time, which is a more practical scenario. If a model is desired which takes into account such temporal inhomogeneity, it must be nonautonomous, which is, of course, more difficult to study in general.
One must ascribe some properties to the time dependence of parameters in the models, for only then can the resulting dynamic be studied accordingly.
 One of the methods of incorporating temporal nonuniformity of the
environments in models is to assume that the parameters are
periodic with the same period of the time variable.

Wang \cite{HYW} considered the existence, multiplicity and nonexistence of positive
$\omega$-periodic solutions for the periodic scalar equation
$$x'(t)=a(t)g(x(t))x(t)-\lambda b(t)f(x(t-\tau(t))),$$
and show that the number of positive $\omega$-periodic solutions can be determined by indexes.
In this paper, we will extend the results to the $n$-dimensional nonautonomous system $\dot{\vect{x}}(t)=-\vect{A}(t)\vect{x}(t) + \l \vect{B}(t)\vect{F}(\vect{x}(t-\t(t)))$.

On the other hand, stability of solution has attracted much attention \cite{CHOWMALLET2}, and
Liapunov's direct method has been the primary tool to deal with stability problems \cite{BURTONBOOK,KG,HALE,KUANGBOOK,TY}.  However, there remains numerous challenging problems in stability and new methods are needed to address the difficulties \cite{BURTONBOOK,XIA}.

Burton \cite{BURTON,BURTONBOOK} used fixed point theories to investigate stability of differential equations. Zhang studied the stability of certain types of delay equations in \cite{ZB1, ZB2, ZB3}, and the stability properties of the scalar delay equation $$x'(t)=-a(t)x(t)+g(t,x_t)$$ by means of fixed point theories.

Fan, Xia and Zhu \cite{XIA} considered the stability of the scalar delay differential equations
$$
x'(t)=-a(t,x_t)x(t)+f(t,x_t),~~x'(t)=-g(t,x(t))+f(t,x_t),
$$
by the contractive mapping principle and Schauder's fixed point theorem. In this paper we will establish sufficient and necessary criteria for the asymptotic stability of the $n$-dimensional nonautonomous system $\vect{x}'(t)=-\vect{A}(t)\vect{x}(t) + \vect{B}(t)\vect{F}(\vect{x}_t)$ with the contractive mapping principle approach.

The paper is organized as follows. We first study existence, multiplicity and nonexistence of periodic solutions of the $n$-dimensional nonautonomous system with delay.  In section \ref{stability sec}, we establish sufficient and necessary criteria for the stability of positive periodic solution for the $n$-dimensional nonautonomous system with delay.  All the proofs of the main findings are postponed to Appendix.  In Sections \ref{feedbacksyssec} and \ref{fdeexpsec} examples including positive and negative growth feedback systems  and their numerical simulations are given to illustrate the results. Finally we give the detailed proofs for the existence, multiplicity and nonexistence of positive periodic solutions and also the stability of the system in Appendix (\S \ref{appendixsec}).

\setcounter{equation}{0}
\section{Existence of Positive Periodic Solutions}
In this section, we shall study the existence, multiplicity and
nonexistence of positive $\o$-periodic solutions for the
nonautonomous $n$-dimensional delay system
\begin{equation}\label{eq1}
\vect{x}'(t)=-\vect{A}(t)\vect{x}(t) + \l \vect{B}(t)\vect{F}(\vect{x}(t-\t(t))),
\end{equation}
where $\vect{A}(t) = \mathop{\rm diag}[a_1(t),a_2(t),\dots,
a_n(t)]$, $\vect{B}(t) = \mathop{\rm diag}[b_1(t),b_2(t),\dots,
b_n(t)]$, $\vect{F}(\vect{x}) =[f^1(\vect{x}),f^2(\vect{x}),\dots,
f^n(\vect{x})]^{T}$, and $\l>0$ is a positive parameter.

Let $\mathbb{R}=(-\infty,
\infty)$, $\mathbb{R}_+=[0, \infty)$, $\mathbb{R}_+^n=\Pi_{i=1}^n
\mathbb{R}_+$ and, for any $\vect{x}=(x_1,...,x_n) \in
\mathbb{R}^n_+$, $\norm{\vect{x}}=\sum_{i=1}^n \abs{x_i}$.
For any $\omega$-periodic function $g$, denote its average by $\bar{g}=\f{1}{\omega}\il_0^\omega g(s)ds$.

For system (\ref{eq1}), we assume that
\begin{description}
\item{(H1)~~$a_i, b_i$ $\in C(\mathbb{R}, [0, \infty))$ are
$\o$-periodic such that $\bar{a}_i>0$, $\bar{b}_i>0$, $i=1,\dots,n$ and $\t \in C(\mathbb{R},\mathbb{R})$ is $\o$-periodic.}
\item{(H2)~~$f^i:
\mathbb{R}_+^n \to [0,\infty)$ is continuous with $ f^i(\vect{x})
> 0 $ for $\norm{\vect{x}} > 0,$ $i=1,\dots,n$,}
\end{description}

In order to state our theorems, we further introduce some notations. Let
$$
\begin{array}{l}
\sigma=\min\limits_{i=1,\dots,n}\{\sigma_i \},~~{\rm where}~~\sigma_i =
e^{-\bar{a}_i\omega}, i=1,\dots,n;\\
M(r)=\max\limits_{i=1,\dots,n}\{f^i(\vect{x}): \vect{x} \in
\mathbb{R}_+^n, \norm{\vect{x}} \leq r\};\\
m(r)=\min\limits_{i=1,\dots,n}\{f^i(\vect{x}): \vect{x} \in
\mathbb{R}_+^n, \s r\leq  \norm{\vect{x}} \leq r\};\\
\G=\min\limits_{i=1,\dots,n}\{(\sigma_i^{-1}-1)^{-1}\bar{b}_i\omega\},~~\chi = \displaystyle\sum_{i=1}^n
\f{\sigma^{-1}_i}{\sigma^{-1}_i-1}\bar{b}_i\omega;\\
\ f_0^i =  \li_{\norm{\vect{x}} \to 0} \f{f^i(\vect{x})}{\norm{\vect{x}}},~~f_{\infty}^i =\li_{\norm{\vect{x}} \to \infty} \f{f^i(\vect{x})}{\norm{\vect{x}}},~~i =1,...,n;\\
\vect{F}_0= \max_{i=1,\dots,n}\{f^i_0\},~~ \vect{F}_{\infty}= \max_{i=1,\dots,n}\{f^i_{\infty}\}.
\end{array}
$$
Moreover, define
$$
\ba{l}
 i_0=\text{number of zeros in the set } \{\vect{F}_0, \vect{F}_\infty\},\\
  i_\infty=\text{number of infinities in the set } \{\vect{F}_0,\vect{F}_\infty\}.
\ea
$$
It is clear that $i_0$ and $i_{\infty}$ can be $0,1,\text{or }2$. In
the following, we shall show that (\ref{eq1}) has $i_0$ or
$i_{\infty}$ positive $\o$-periodic solution(s) for sufficiently
large or small $\l$, respectively. Furthermore, we are able
to obtain explicit intervals of $\l$ such that (\ref{eq1}) has no,
one or two positive $\o$-periodic solution(s). A solution
$\vect{x}(t)=(x_1(t),...,x_n(t))$ is positive if, for each
$i=1,...,n,$ $x_i(t) \geq 0$ for all $t \in \mathbb{R}$ and there
is at least one component which is positive on $\mathbb{R}$.

We only give our main results here and their proofs are detailed in Appendix (\S \ref{proof periodicity}).

\begin{theorem}\label{th2} Assume \rm{(H1)-(H2)} hold. \\
(a). If $i_0=1$ or $2$,  then (\ref{eq1}) has  $i_0$ positive $\o$-periodic solution(s) for $\l>\f{1}{m(1)\G}>0$.\\
(b). If $i_{\infty}=1$ or $2$, then (\ref{eq1}) has  $i_{\infty}$ positive $\o$-periodic solution(s) for $0<\l <\f{1}{M(1)\chi}$.\\
(c). If $i_0=0$ , then (\ref{eq1}) has  no positive $\o$-periodic solution for sufficiently large $\lambda>0$; if $i_{\infty}=0$, then (\ref{eq1}) has  no positive $\o$-periodic solution for sufficiently small $\l >0$.\\
(d). If $i_0=i_{\infty}=1$ , then (\ref{eq1}) has a positive $\o$-periodic solution for all $\lambda>0.$
\end{theorem}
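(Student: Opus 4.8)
The plan is to recast the search for positive $\o$-periodic solutions of \eqref{eq1} as a fixed point problem on a cone. For each component the scalar linear part is inverted by the Green's function
$$G_i(t,s)=\f{e^{\il_t^s a_i(\theta)\,d\theta}}{e^{\bar a_i\o}-1},\qquad t\le s\le t+\o,$$
so that $\vect{x}$ is a positive $\o$-periodic solution if and only if it is a fixed point of the operator $T_\l$ given by $(T_\l\vect{x})_i(t)=\l\il_t^{t+\o}G_i(t,s)b_i(s)f^i(\vect{x}(s-\t(s)))\,ds$ on the Banach space $X$ of continuous $\o$-periodic maps $\R\to\R^n$ with norm $\norm{\vect{x}}=\sup_t\sum_i\abs{x_i(t)}$. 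Since $\f{1}{\sigma_i^{-1}-1}\le G_i(t,s)\le\f{\sigma_i^{-1}}{\sigma_i^{-1}-1}$, the oscillation of each $G_i$ is controlled by $\min G_i/\max G_i\ge\sigma_i\ge\s$; I would first record this bound, deduce that $T_\l$ maps the cone
$$K=\Big\{\vect{x}\in X:\ x_i(t)\ge0,\ \min_t\sum_{i}x_i(t)\ge\s\norm{\vect{x}}\Big\}$$
into itself, and check by Arzel\`a--Ascoli that $T_\l$ is completely continuous. Any positive $\o$-periodic solution automatically lies in $K$.

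The engine is the Krasnoselskii cone compression--expansion theorem, fed by two estimates valid for $\vect{x}\in K$ with $\norm{\vect{x}}=r$: the upper bound $\norm{T_\l\vect{x}}\le\l M(r)\chi$ (from $G_i\le\sigma_i^{-1}/(\sigma_i^{-1}-1)$ and $f^i\le M(r)$) and the lower bound $\norm{T_\l\vect{x}}\ge\l m(r)\G$ (from $G_i\ge1/(\sigma_i^{-1}-1)$, a single component, and the cone property $\s r\le\norm{\vect{x}(s-\t(s))}\le r$). A component for which $f^i_0$ or $f^i_\infty$ is infinite gives a sharper one-sided lower bound. The dictionary is then: $\vect{F}_0=0$ or $\vect{F}_\infty=0$ forces $M(r)/r\to0$ as $r\to0$ or $r\to\infty$, hence $\norm{T_\l\vect{x}}\le\norm{\vect{x}}$ (compression) on a small or large sphere; $\vect{F}_0=\infty$ or $\vect{F}_\infty=\infty$ forces the one-sided lower bound to exceed $r$ (expansion) on a small or large sphere; while $\l>1/(m(1)\G)$ and $\l<1/(M(1)\chi)$ are exactly expansion and compression on the unit sphere. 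Reading off (a): each zero among $\{\vect{F}_0,\vect{F}_\infty\}$ supplies a compression sphere which, paired with the expansion at $r=1$, yields one fixed point in the intervening annulus, for a total of $i_0$; (b) is dual, pairing each expansion sphere with the compression at $r=1$ for a total of $i_\infty$; and (d), where $\vect{F}_0,\vect{F}_\infty$ are one zero and one infinite, pairs a compression sphere with an expansion sphere whose radii can be chosen for every $\l>0$. Distinctness when $i_0=2$ or $i_\infty=2$ follows because the strict inequality at $r=1$ prevents any fixed point from having norm exactly $1$.

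For the nonexistence statement (c) I would argue by a priori bounds rather than fixed point theory. If $i_\infty=0$, then $\vect{F}_0,\vect{F}_\infty<\infty$ give a global linear majorant $f^i(\vect{x})\le C\norm{\vect{x}}$, whence $M(r)\le Cr$ and any fixed point obeys $\norm{\vect{x}}=\norm{T_\l\vect{x}}\le\l C\chi\norm{\vect{x}}$, which is impossible once $\l<1/(C\chi)$. If $i_0=0$, then $\vect{F}_0,\vect{F}_\infty>0$ provide components $j,k$ with $f^j(\vect{x})\ge\alpha\norm{\vect{x}}$ for small $\norm{\vect{x}}$ and $f^k(\vect{x})\ge\beta\norm{\vect{x}}$ for large $\norm{\vect{x}}$. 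For a hypothetical solution of norm $\rho$, the cone bound $\norm{\vect{x}(t)}\ge\s\rho$ together with the single-component lower estimate yields $\rho\ge\l\G\alpha\s\rho$ when $\rho$ is small, $\rho\ge\l\G\beta\s\rho$ when $\rho$ is large, and $\l\le\rho/(\G m(\rho))$ (with $m$ bounded below on compacta) for the intermediate range; taking $\l$ past the maximum of the three resulting thresholds rules out solutions of every norm.

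The main obstacle I anticipate is the bookkeeping forced by the fact that $\vect{F}_0,\vect{F}_\infty$ and $M$ are defined through the maximum over components while $m$ and the cone use the minimum: the expansion estimates in (b) and the small/large-norm exclusions in (c) cannot use $m(r)$, which a single vanishing off-diagonal component can drive to $0$, and must instead isolate the one component whose limit is infinite (for existence) or positive (for nonexistence), whereas the compression estimates and the intermediate-norm case may safely use the uniform quantities $M$ and $m(1)$. Ordering the Krasnoselskii radii as $r_1<1<r_2$ and verifying the Green's function ratio bound underlying cone invariance are the remaining technical points, but both are routine once the estimates above are in place.
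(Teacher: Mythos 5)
Your proposal is correct and follows essentially the same route as the paper: the same Green's-function operator and cone, the same four norm estimates (upper via $M(r)\chi$, lower via $m(r)\Gamma$, plus single-component bounds when a limit is infinite and all-component bounds when limits vanish), with Krasnoselskii compression--expansion serving as the statement-level form of the fixed-point-index lemma the paper invokes. The only differences are cosmetic: the paper's proof of (c) consolidates your three thresholds (small, intermediate via positivity of $m$ on compacta, large) into a single constant $c_1=\min\{\eta_1,\eta_2,\min f^j(\vect{x})/\norm{\vect{x}}\}$ on the annulus $r_1\s\leq\norm{\vect{x}}\leq r_2$, which additionally yields the explicit nonexistence interval of Theorem \ref{th3}, and it uses the componentwise cone $x_i(t)\geq\s_i\sup_t\abs{x_i(t)}$, of which your summed condition is a consequence.
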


Theorem \ref{th3} is a direct consequence of the proof of Theorem \ref{th2}(c). Under the conditions of theorem \ref{th3} we are able to give explicit intervals of $\l$
such that (\ref{eq1}) has no positive $\o$-periodic solution.
\begin{theorem}\label{th3} Assume \rm{(H1)-(H2)} hold. \\
(a). If there is a $c_1>0$ such that $f^i(\vect{x}) \geq c_1 \norm{\vect{x}}$ for $ \vect{x} \in \mathbb{R}_+^n$,
$i=1,\dots,n$, then (\ref{eq1}) has no positive $\o$-periodic solution for $\lambda>\f{1}{\s \G c_1}.$\\
(b). If there is a $c_2>0$ such $f^i(\vect{x}) \leq c_2
\norm{\vect{x}} $ for $ \vect{x} \in \mathbb{R}_+^n$,
$i=1,\dots,n$,  then (\ref{eq1}) has no positive $\o$-periodic
solution for all $0< \l < \f{1}{c_2 \chi}$. \end{theorem}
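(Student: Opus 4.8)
The plan is to argue by contradiction, using the integral (fixed-point) reformulation of \eqref{eq1} already built in the proof of Theorem~\ref{th2}. Suppose, contrary to the claim, that $\vect{x}(t)=(x_1(t),\dots,x_n(t))$ is a positive $\o$-periodic solution. Rewriting the $i$-th scalar equation as $x_i'(t)+a_i(t)x_i(t)=\l b_i(t)f^i(\vect{x}(t-\t(t)))$ and inverting the linear part through the periodic Green's function gives
$$x_i(t)=\l\il_t^{t+\o}G_i(t,s)\,b_i(s)\,f^i(\vect{x}(s-\t(s)))\,ds,\qquad G_i(t,s)=\f{\exp\!\big(\int_t^s a_i(\theta)\,d\theta\big)}{\sigma_i^{-1}-1}.$$
Because $0\le\int_t^s a_i(\theta)\,d\theta\le\bar a_i\o$ for $s\in[t,t+\o]$, the kernel obeys the constant bounds $G_i^-:=(\sigma_i^{-1}-1)^{-1}\le G_i(t,s)\le \sigma_i^{-1}(\sigma_i^{-1}-1)^{-1}=:G_i^+$, and in particular $G_i^-/G_i^+=\sigma_i\ge\s$.

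Since $b_i$, $\t$ and $\vect{x}$ are $\o$-periodic, the integral $I_i:=\il_t^{t+\o}b_i(s)f^i(\vect{x}(s-\t(s)))\,ds$ does not depend on $t$, so the representation yields the two-sided estimate $\l G_i^- I_i\le x_i(t)\le \l G_i^+ I_i$ for all $t$. Combining the two sides through $\l G_i^- I_i=\sigma_i\,\l G_i^+ I_i$ produces the Harnack-type inequality $x_i(t)\ge\sigma_i\max_s x_i(s)\ge\s\max_s x_i(s)$; summing over $i$ and using $\sum_i\max_s x_i(s)\ge\max_s\sum_i x_i(s)$ gives $\norm{\vect{x}(t)}\ge\s\,r$ for every $t$, where $r:=\max_t\norm{\vect{x}(t)}>0$.

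For (a), fix any index $i$. The growth bound $f^i(\vect{x})\ge c_1\norm{\vect{x}}$ together with $\norm{\vect{x}(s-\t(s))}\ge\s r$ gives $I_i\ge c_1\s r\,\bar b_i\o$, so the lower kernel bound yields $x_i(t)\ge\l G_i^- I_i\ge\l c_1\s r\,(G_i^-\bar b_i\o)\ge\l c_1\s\G\,r$, the last step by the definition $\G=\min_j G_j^-\bar b_j\o$. Since also $x_i(t)\le\norm{\vect{x}(t)}\le r$, we obtain $r\ge\l c_1\s\G\,r$, and cancelling $r>0$ forces $\l\le(c_1\s\G)^{-1}$, contradicting $\l>\f{1}{\s\G c_1}$. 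For (b), the reverse bound $f^i(\vect{x})\le c_2\norm{\vect{x}}$ with $\norm{\vect{x}(s-\t(s))}\le r$ gives $I_i\le c_2 r\,\bar b_i\o$, whence $x_i(t)\le\l G_i^+ I_i\le\l c_2 r\,(G_i^+\bar b_i\o)$; evaluating at a time $t^\ast$ with $\norm{\vect{x}(t^\ast)}=r$ and summing over $i$ gives $r=\sum_i x_i(t^\ast)\le\l c_2 r\sum_i G_i^+\bar b_i\o=\l c_2\chi\,r$, so cancelling $r>0$ yields $\l\ge(c_2\chi)^{-1}$, contradicting $0<\l<\f{1}{c_2\chi}$.

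The analytic ingredients—the Green's representation and its two-sided kernel bounds—are inherited from the existence argument, so the only genuinely delicate point lies in part (a). Unlike (b), which sums all components and therefore meets the aggregate constant $\chi=\sum_i G_i^+\bar b_i\o$ head-on, part (a) works through a single component and so must first control $f^i$ inside $I_i$ by the uniform lower envelope $\norm{\vect{x}(t)}\ge\s r$. Securing that Harnack-type inequality from the kernel ratio $G_i^-/G_i^+=\sigma_i$ is the crux; once it is available, both conclusions reduce to cancelling the common factor $r$.
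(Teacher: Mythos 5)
Your proposal is correct and follows essentially the same route as the paper: the paper proves Theorem~\ref{th3} as a direct byproduct of the proof of Theorem~\ref{th2}(c), using the Green's-function fixed-point formulation (Lemma~\ref{lm-fixed-equation-equal}) together with the estimates $\norm{\vect{T}_{\l}\vect{x}}\geq \l\s\G c_1\norm{\vect{x}}$ and $\norm{\vect{T}_{\l}\vect{x}}\leq \l\chi c_2\norm{\vect{x}}$ (Lemmas~\ref{f_estimate_>*} and~\ref{f_estimate_<*}) to reach the same contradiction $r>r$. Your inline derivation of the Harnack-type inequality $x_i(t)\geq\s_i\sup_s x_i(s)$ from the kernel ratio is precisely the cone membership the paper establishes in Lemma~\ref{lm-compact}, so the two arguments differ only in packaging, not in substance.
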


\begin{theorem}\label{th4} Assume \rm{(H1)-(H2)} hold and $i_0=i_{\infty}=0$. If
\begin{equation}
\f{1}{ \s \G} \f{1}{\max\{\vect{F}_0, \vect{F}_{\infty}\}} < \l < \f{1}{\chi} \f{1}{\min \{\vect{F}_0, \vect{F}_{\infty} \}},
\end{equation}
then (\ref{eq1}) has a positive $\o$-periodic solution.
\end{theorem}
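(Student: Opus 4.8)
The plan is to recast (\ref{eq1}) as a fixed-point equation for a completely continuous operator on a cone and then invoke the Krasnoselskii cone compression/expansion theorem, reusing the operator and cone already constructed in the proof of Theorem \ref{th2}. Solving each scalar equation $x_i'=-a_i(t)x_i+\l b_i(t)f^i(\vect{x}(t-\t(t)))$ by variation of constants under the periodicity constraint exhibits an $\o$-periodic solution of (\ref{eq1}) as a fixed point of
$$
(T_\l\vect{x})_i(t)=\l\il_t^{t+\o}G_i(t,s)\,b_i(s)\,f^i(\vect{x}(s-\t(s)))\,ds,\qquad i=1,\dots,n,
$$
where $G_i(t,s)=e^{\int_t^s a_i(u)\,du}/(\s_i^{-1}-1)$ is the Green's function of the periodic linear part. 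Since $a_i\ge 0$, for $t\le s\le t+\o$ one has $(\s_i^{-1}-1)^{-1}\le G_i(t,s)\le \s_i^{-1}(\s_i^{-1}-1)^{-1}$, a two-sided bound whose ratio is exactly $\s_i$, and this is precisely what renders the cone
$$
K=\{\vect{x}\in X: x_i(t)\ge 0,\ \min_t\norm{\vect{x}(t)}\ge \s\norm{\vect{x}}\}
$$
(in the Banach space $X$ of continuous $\o$-periodic $\R^n$-valued functions with $\norm{\vect{x}}=\max_t\norm{\vect{x}(t)}$) invariant under $T_\l$; complete continuity follows from (H1)--(H2) and a standard compactness argument. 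I would cite all of this from the proof of Theorem \ref{th2} rather than redo it.

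The argument then rests on two estimates valid for $\vect{x}\in K$ with $\norm{\vect{x}}=\rho$. The upper Green bound and the definition of $\chi$ give $\norm{T_\l\vect{x}}\le \l\chi M(\rho)$, and since $i_0=i_\infty=0$ forces $0<\vect{F}_0,\vect{F}_\infty<\iy$, the elementary limit $M(\rho)/\rho\to\vect{F}_0$ as $\rho\to 0^+$ (resp. $\to\vect{F}_\infty$ as $\rho\to\iy$) yields $\norm{T_\l\vect{x}}\le\l\chi(\vect{F}_0+\e)\rho$ for small $\rho$ (resp. with $\vect{F}_\infty$ for large $\rho$). For the reverse direction I would pick the index $j$ realizing $f_0^j=\vect{F}_0$ (near $0$) or $f_\infty^j=\vect{F}_\infty$ (near $\iy$); the cone inequality forces $\s\rho\le\norm{\vect{x}(s-\t(s))}\le\rho$, so the lower Green bound together with $f^j(\vect{y})\ge(\vect{F}_0-\e)\norm{\vect{y}}\ge(\vect{F}_0-\e)\s\rho$ gives $\norm{T_\l\vect{x}}\ge(T_\l\vect{x})_j(t)\ge\l\s\G(\vect{F}_0-\e)\rho$ for small $\rho$, and likewise $\ge\l\s\G(\vect{F}_\infty-\e)\rho$ for large $\rho$.

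These estimates let me close the argument by the compression/expansion dichotomy, split according to the $\max$ and $\min$ in the hypothesis. Suppose first $\vect{F}_0\ge\vect{F}_\infty$, so the hypothesis reads $1/(\s\G\vect{F}_0)<\l<1/(\chi\vect{F}_\infty)$: the left inequality makes $\l\s\G\vect{F}_0>1$, hence $\norm{T_\l\vect{x}}\ge\norm{\vect{x}}$ on a sufficiently small sphere $\norm{\vect{x}}=r$, while the right inequality makes $\l\chi\vect{F}_\infty<1$, hence $\norm{T_\l\vect{x}}\le\norm{\vect{x}}$ on a sufficiently large sphere $\norm{\vect{x}}=R$, the cone-compression configuration. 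If instead $\vect{F}_0<\vect{F}_\infty$, the hypothesis becomes $1/(\s\G\vect{F}_\infty)<\l<1/(\chi\vect{F}_0)$ and the same two computations now give $\norm{T_\l\vect{x}}\le\norm{\vect{x}}$ on the small sphere and $\norm{T_\l\vect{x}}\ge\norm{\vect{x}}$ on the large sphere, the cone-expansion configuration. In either case Krasnoselskii's theorem furnishes a fixed point $\vect{x}^*\in K$ with $r\le\norm{\vect{x}^*}\le R$, and $\min_t\norm{\vect{x}^*(t)}\ge\s r>0$ shows it is a positive $\o$-periodic solution.

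The main obstacle is the lower estimate: to recover the factor $\s\G$ and the correct constant $\vect{F}_0$ (or $\vect{F}_\infty$), I must work with the single component $j$ attaining $\max_i f_0^i$, since $\vect{F}_0$ is a \emph{maximum} over $i$ while a bound using all components at once would only see $\min_i f_0^i$; the factor $\s$ is then squeezed out of the cone inequality $\norm{\vect{x}(s-\t(s))}\ge\s\rho$. The remaining care is the $\e$-bookkeeping needed to pass from the limiting behaviour of $M(\rho)/\rho$ and of the annular lower bound for $f^j$ to genuine inequalities on concrete spheres $\norm{\vect{x}}=r$ and $\norm{\vect{x}}=R$; continuity of the $f^i$ together with $0<\vect{F}_0,\vect{F}_\infty<\iy$ makes this routine. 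Everything else is inherited from the fixed-point framework of Theorem \ref{th2}.
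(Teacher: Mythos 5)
Your proposal is correct and follows essentially the same route as the paper: the same operator $\vect{T}_\l$ and cone from Theorem \ref{th2}, the same split on $\max\{\vect{F}_0,\vect{F}_\infty\}$ versus $\min\{\vect{F}_0,\vect{F}_\infty\}$, the upper estimate via all components (Lemma \ref{f_estimate_<*}) and the lower estimate via the single component attaining the maximum (Lemma \ref{f_estimate_>*}) with the factor $\s$ extracted from the cone inequality, exactly as in the paper. Your invocation of Krasnoselskii compression/expansion is just the fixed-point-index computation of Lemma \ref{lm1} in disguise, so there is no substantive difference.
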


The above theorems for (\ref{eq1}) can be presented as Table \ref{tabledef} with different numbers of positive periodic solutions. We only deal with existence of positive solutions here and uniqueness will be addressed in a future work.
\setlength{\parskip}{0pt}

\setcounter{equation}{0}
\section{Stability of Positive Periodic Solutions}\label{stability sec}

In this section, we study the stability of the positive periodic solution for (\ref{eq1}) and start with the stability of zero solution since a non-zero solution can be transformed to a zero solution.  We establish sufficient and necessary criteria for the asymptotic stability  for the system. In this section, consider a delay system of the following form
\begin{equation}\label{SYSTEM}
\vect{x}'(t)=-\vect{A}(t)\vect{x}(t) + \vect{B}(t)\vect{F}(\vect{x}_t),
\end{equation}
where  $\vect{F}(\hat{0})=0$, $\vect{A}(t) = \mathop{\rm diag}[a_1(t),a_2(t),\dots,
a_n(t)]$, $\vect{B}(t) = \mathop{\rm diag}[b_1(t),b_2(t),\dots,
b_n(t)]$,
$a_i(t), b_i(t) \in C(\mathbb{R},\mathbb{R})$, $\vect{x}_t \in C([-r,0],\mathbb{R}^n)$. Here $r>0$ and $\hat{0}$ is the null element in $C([-r,0],\mathbb{R}^n)$.

We begin with some notation: $\mathbb{R}^n$ is an $n$-dimensional real Euclidean space with max norm $\abs{ \ \cdot \ }$; for $b>a \ ( {\rm it \ is \ may \ be \ possible \ } a=-\infty \ {\rm or} \ b=+\infty)$. We denote $C([a,b],\mathbb{R}^n)$ the Banach space of continuous functions mapping the interval $[a,b]$ into $\mathbb{R}^n$, and, for $\phi \in C([a,b],\mathbb{R}^n)$, the norm of $\phi$ is defined as $\norm{\phi}=\sup\limits_{a\leq\theta\leq b}\abs{\phi(\theta)}$, where $\abs{ \ \cdot \ }$ is a norm in $\mathbb{R}^n$. Here we use $\mathscr{C}=C([-r,0],\mathbb{R}^n)$, and $\vect{F}\in C(\mathscr{C},\mathbb{R}^n)$.

We assume that
\begin{description}
\item{(H3)~~$\il_0^ta_i(s)ds\rightarrow+\infty,  \ t\rightarrow +\infty  \ \ \ i=1, 2,\cdots, n.$}
\item{(H4)~~ There exist $L>0$ and $K_L>0$ such that
    $${\rm and }\ \  \abs{\vect{F}(\vect{\varphi})-\vect{F}(\vect{\psi})}\leq K_L\norm{\vect{\varphi}-\vect{\psi}},$$}
  for all $\vect{\varphi},\vect{\psi} \in \mathscr{C}_L:=\{\vect{x}\in \mathscr{C}:\norm{\vect{x}} \leq L \}$.
\item{(H5)~~There exists $\alpha\in(0,1)$ such that
$$ K_L \il_0^{t}\exp\ld-\il_s^{t}a_i(u)du\rd\abs{b_i(s)}ds\leq\alpha, \ \ \ t\geq0,\ i=1,\dots,n.$$}
\end{description}

The following two theorems give the delay-independent sufficient criteria and necessary criteria for the asymptotic stability, respectively. The details of the proof is presented in Appendix (\S \ref{proof stability}).

\begin{theorem}\label{STH} Assume \rm{(H3)-(H5)} hold, then the zero solution of (\ref{eq1}) is asymptotically stable.
\end{theorem}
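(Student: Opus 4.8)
\section*{Proof proposal}

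The plan is to recast the stability question for (\ref{SYSTEM}) as a fixed-point problem through the variation-of-constants formula and then invoke the contraction mapping principle, in the spirit of Burton \cite{BURTON,BURTONBOOK} and Zhang \cite{ZB1,ZB2,ZB3}. Since $\vect{A}(t)$ is diagonal, the $i$-th component of (\ref{SYSTEM}) reads $x_i'(t)=-a_i(t)x_i(t)+b_i(t)f^i(\vect{x}_t)$, so a solution with initial data $\phi\in\mathscr{C}$ prescribed at $t=0$ must satisfy, for $t\ge 0$,
$$ x_i(t)=\phi_i(0)\exp\ld-\il_0^t a_i(u)\,du\rd+\il_0^t\exp\ld-\il_s^t a_i(u)\,du\rd b_i(s)f^i(\vect{x}_s)\,ds. $$
First I would fix $L,K_L$ as in (H4)--(H5) and, for an initial function $\phi$ with $\norm{\phi}$ small, work in
$$ \mathcal{S}_\phi=\{\vect{x}\in C([-r,\infty),\mathbb{R}^n): \vect{x}_0=\phi,\ \norm{\vect{x}_t}\le L\ \forall t\ge 0,\ \vect{x}(t)\ra 0\ \text{as}\ t\ra\infty\}, $$
with the supremum norm; the conditions ``bounded by $L$'' and ``tending to $0$'' are closed under uniform limits, so $\mathcal{S}_\phi$ is a complete metric space. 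Define $P$ on $\mathcal{S}_\phi$ by $(P\vect{x})(t)=\phi(t)$ on $[-r,0]$ and by the right-hand side of the displayed formula for $t\ge 0$.

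The boundedness and contraction steps are the routine part. Because $\vect{F}(\hat 0)=0$, (H4) gives $\abs{f^i(\vect{x}_s)}\le K_L\norm{\vect{x}_s}\le K_L L$, so using (H5) termwise one gets $\abs{(P\vect{x})_i(t)}\le\norm{\phi}+\alpha L$; choosing $\norm{\phi}\le(1-\alpha)L$ keeps $P\vect{x}$ in the ball of radius $L$, and continuity (including at $t=0$) is immediate. For $\vect{x},\vect{y}\in\mathcal{S}_\phi$ the boundary terms cancel, and subtracting the formulas while applying (H4) and (H5) yields $\abs{(P\vect{x})(t)-(P\vect{y})(t)}\le\alpha\,\norm{\vect{x}-\vect{y}}$, so $P$ is a contraction with constant $\alpha<1$.

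The one genuinely delicate step, which I expect to be the main obstacle, is verifying that $P$ actually lands in $\mathcal{S}_\phi$ rather than merely in the bounded ball, i.e. that $(P\vect{x})(t)\ra 0$. The boundary term $\phi_i(0)\exp\{-\il_0^t a_i\}$ vanishes by (H3). For the convolution term I would split $\il_0^t=\il_0^T+\il_T^t$: given $\eta>0$, first pick $T$ so large that $\norm{\vect{x}_s}\le\eta$ for $s\ge T$ (possible since $\vect{x}\in\mathcal{S}_\phi$), whence the tail is at most $K_L\eta\il_T^t\exp\{-\il_s^t a_i\}\abs{b_i}\,ds\le\alpha\eta$ by (H5); then, with $T$ fixed, writing $\exp\{-\il_s^t a_i\}=\exp\{-\il_0^t a_i\}\exp\{\il_0^s a_i\}$ and setting $C_T=\sup_{0\le s\le T}\exp\{\il_0^s a_i\}<\infty$ (finite, as $a_i$ need only be continuous, not signed), the head is bounded by $\exp\{-\il_0^t a_i\}\,C_T\,K_L L\il_0^T\abs{b_i}\,ds$, which tends to $0$ as $t\ra\infty$ by (H3). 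Hence $\limsup_{t\to\infty}\abs{(P\vect{x})(t)}\le\alpha\eta$ for every $\eta>0$, giving the required decay; note that the general-sign $a_i$ forces me to route the head estimate through $C_T$ rather than through monotonicity of $\il_0^s a_i$.

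Finally, the contraction mapping principle furnishes a unique fixed point $\vect{x}^\ast\in\mathcal{S}_\phi$, which is precisely the solution of (\ref{SYSTEM}) with initial data $\phi$, and by construction it is bounded by $L$ and satisfies $\vect{x}^\ast(t)\ra 0$. To record the $\varepsilon$--$\delta$ stability explicitly, the estimate $\abs{\vect{x}^\ast(t)}\le\norm{\phi}+\alpha\sup_{0\le s\le t}\norm{\vect{x}^\ast_s}$ forces $\sup_{t\ge0}\abs{\vect{x}^\ast(t)}\le\norm{\phi}/(1-\alpha)$, so for $\varepsilon\in(0,L]$ the choice $\delta=(1-\alpha)\varepsilon$ keeps the solution within $\varepsilon$; combined with the decay established above, this upgrades stability to asymptotic stability and completes the proof.
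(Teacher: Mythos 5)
Your overall architecture coincides with the paper's: the same variation-of-constants recast of (\ref{SYSTEM}), the same complete metric space of $L$-bounded functions tending to zero, the same contraction estimate from (H4)--(H5), and essentially the same head/tail splitting for the decay of the convolution term. (Your constant $C_T=\sup_{0\le s\le T}\exp\{\int_0^s a_i\}$ plays the role of the paper's factorization $\vect{\Phi}(t,s)=\vect{\Phi}(t,t_1)\vect{\Phi}(t_1,s)$ followed by (H5) on $[t_0,t_1]$; both correctly accommodate sign-changing $a_i$ in that step.)

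There is, however, a genuine flaw in your invariance and stability estimates: twice you bound the homogeneous term $\phi_i(0)\exp\{-\int_0^t a_i(u)\,du\}$ by $\norm{\phi}$, i.e.\ you implicitly use $\exp\{-\int_0^t a_i\}\le 1$. Under (H3) alone the $a_i$ are merely continuous and may be negative on long stretches, so this exponential can exceed $1$ by an arbitrarily large (finite) amount; all that is guaranteed is $K_i=\sup_{t\ge t_0}\exp\{-\int_{t_0}^t a_i(s)\,ds\}<\infty$, finite by continuity together with (H3). Consequently your choice $\norm{\phi}\le(1-\alpha)L$ does not keep $P\vect{x}$ inside the $L$-ball, and your final choice $\delta=(1-\alpha)\varepsilon$ does not yield stability --- notice the irony that you explicitly flagged the sign issue for the head estimate but then dropped it in exactly the two places where it bites. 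The repair is what the paper does: set $K=\max_i K_i$ and impose $\delta K+\alpha L\le L$ for invariance and $\delta K+\alpha\varepsilon<\varepsilon$ for stability, i.e.\ $\delta\le(1-\alpha)\varepsilon/K$ in your a priori bound $\abs{\vect{x}^\ast(t)}\le K\norm{\phi}+\alpha\sup_{0\le s\le t}\norm{\vect{x}^\ast_s}$. A secondary, minor point: asymptotic stability should be verified for every initial time $t_0\ge 0$, not only $t_0=0$; the argument transfers verbatim since the integrand in (H5) is nonnegative, so $K_L\int_{t_0}^{t}\exp\{-\int_s^{t}a_i\}\abs{b_i(s)}\,ds\le K_L\int_0^{t}\exp\{-\int_s^{t}a_i\}\abs{b_i(s)}\,ds\le\alpha$, but the constant $K$ then depends on $t_0$, which is another reason it cannot be dispensed with.
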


\begin{theorem}\label{STH1} Assume \rm{(H4) and (H5)} hold, and also
  \begin{description}
  \item{(H6)$$\liminf_{t\rightarrow +\infty} \il_0^ta_i(s)ds\in\mathbb{R}, \ \ \ i=1, 2, \cdots, n.$$}
  \end{description}
If the zero solution of (\ref{SYSTEM}) is asymptotically stable, then we have
   \begin{description}
   \item{(H3)$$ \il_0^ta_i(s)ds\rightarrow+\infty, \ t\rightarrow +\infty \ \ \ i=1, 2, \cdots, n.$$}
   \end{description}
\end{theorem}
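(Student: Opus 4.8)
The plan is to prove the contrapositive by a variation-of-constants argument dual to the one behind Theorem \ref{STH}. Suppose, for contradiction, that the zero solution of (\ref{SYSTEM}) is asymptotically stable but (H3) fails for some index $i$. By (H6) the quantity $\il_0^t a_i(u)du$ has a finite $\liminf$, so there is a sequence $t_k\uparrow+\iy$ with $\il_0^{t_k}a_i(u)du\to\ell_i\in\R$; consequently $\exp\ld-\il_0^{t_k}a_i(u)du\rd\to c_i:=e^{-\ell_i}>0$. The goal is to exhibit a solution that does not tend to $\hat 0$, contradicting asymptotic stability, and thereby force $\il_0^t a_i\to+\iy$, i.e. (H3).

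First I would record the variation-of-constants representation for the $i$-th component of the solution $\vect{x}(t)=\vect{x}(t;\psi)$ with initial datum $\psi\in\mathscr{C}$:
\[ x_i(t)=\psi_i(0)\exp\ld-\il_0^t a_i(u)du\rd+\il_0^t\exp\ld-\il_s^t a_i(u)du\rd b_i(s)F^i(\vect{x}_s)\,ds. \]
Since $\vect{F}(\hat 0)=0$, (H4) gives the pointwise bound $\abs{F^i(\vect{x}_s)}\le K_L\norm{\vect{x}_s}$ on $\mathscr{C}_L$, and (H5) gives the memory bound $\big|\il_0^t\exp\{-\il_s^t a_i\}b_i F^i(\vect{x}_s)ds\big|\le\al\sup_{0\le s\le t}\norm{\vect{x}_s}$. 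By asymptotic stability, for $\norm{\psi}$ small the solution remains in $\mathscr{C}_L$, is bounded, and satisfies $\norm{\vect{x}_t}\to0$.

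Next I would evaluate the representation along $t_k$ and pass to the limit. Writing the memory integral as $\exp\{-\il_0^{t}a_i\}\il_0^{t}\exp\{\il_0^s a_i\}b_i F^i(\vect{x}_s)ds$ and splitting $\il_0^{t_k}=\il_0^T+\il_T^{t_k}$, the head converges (for fixed $T$) to $c_i\il_0^T\exp\{\il_0^s a_i\}b_i F^i(\vect{x}_s)ds$ as $k\to\iy$, while the tail is bounded by $\al\sup_{s\ge T}\norm{\vect{x}_s}$, which tends to $0$ as $T\to\iy$. Letting $k\to\iy$ and then $T\to\iy$ yields $x_i(t_k)\to c_i\big(\psi_i(0)+G_i\big)$, where $G_i:=\il_0^{\iy}\exp\{\il_0^s a_i\}b_i(s)F^i(\vect{x}_s)ds$ converges absolutely by (H5)--(H6). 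Asymptotic stability forces $x_i(t_k)\to0$, hence the rigidity relation $\psi_i(0)=-G_i[\psi]$ must hold for every admissible initial datum $\psi$. I would then contradict this by choosing $\psi$ concentrated in the $i$-th coordinate at $\ta=0$ and small enough that the contraction bound on $G_i$ keeps $\abs{G_i[\psi]}<\abs{\psi_i(0)}$, so that the persistent free term $c_i\psi_i(0)$ cannot be annihilated.

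The main obstacle is precisely the memory term. When (H3) holds the kernel $\exp\{-\il_s^t a_i\}\abs{b_i(s)}$ loses all its mass as $t\to+\iy$, that integral vanishes, and the argument behind Theorem \ref{STH} closes at once; but when (H3) fails the mass can persist near $s=0$, so the memory term need not vanish along $t_k$. The crux is to show that this non-vanishing memory contribution cannot conspire to cancel $c_i\psi_i(0)$: this is what the head/tail decomposition, the decay $\norm{\vect{x}_s}\to0$ in the tail, and the strict contraction constant $\al<1$ must together deliver through the choice of $\psi$. Establishing this non-cancellation \emph{uniformly} over admissible initial data, rather than merely generically, is the delicate point, and is where (H5) and (H6) are used in tandem.
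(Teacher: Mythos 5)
There is a genuine gap at the final step. Your reduction along $t_k$ to the rigidity relation $\psi_i(0)=-G_i[\psi]$ is sound: the head/tail splitting works, and the absolute convergence of $G_i$ does follow, since $t\mapsto K_L\il_0^{t}\exp\ld\il_0^s a_i(u)du\rd\abs{b_i(s)}ds$ is nondecreasing and, by (H5) together with the bound $\abs{\il_0^{t_k}a_i(s)ds}\leq Q$ supplied by (H6), is bounded by $\al e^{Q}$ along $t_k\to+\iy$. But the proposed punchline --- choose $\psi$ ``small enough that the contraction bound on $G_i$ keeps $\abs{G_i[\psi]}<\abs{\psi_i(0)}$'' --- cannot work, because $G_i[\psi]$ scales \emph{linearly} with $\norm{\psi}$, exactly as $\psi_i(0)$ does. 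The only available estimates give $\sup_{t}\norm{\vect{x}_t}\leq K\norm{\psi}/(1-\al)$ and hence
$$\abs{G_i[\psi]}\ \leq\ \f{K\norm{\psi}}{1-\al}\, K_L\il_0^{\iy}\exp\ld\il_0^s a_i(u)du\rd\abs{b_i(s)}ds,$$
where the integral may be as large as $\al e^{Q}$; nothing in (H4)--(H6) forces the resulting constant below $1$, so shrinking $\norm{\psi}$ changes nothing, and the cancellation $\psi_i(0)=-G_i[\psi]$ could a priori hold for \emph{all} small data started at $t_0=0$. You correctly identified this non-cancellation as ``the delicate point,'' but the mechanism you propose does not resolve it.

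The paper resolves it by exploiting a degree of freedom your setup discards: the choice of initial time. Asymptotic stability constrains solutions launched at \emph{every} $t_0$, and since the monotone integral above converges, its tails are small: one may pick $m$ with $K_L\il_{t_m}^{t_n}\exp\ld\il_0^s a_i(u)du\rd\abs{b_i(s)}ds\leq \f{1-\al}{2K^2e^{2Q}}$ for $n\geq m$. Starting the solution at $t_m$ with constant datum $\delta$, the \emph{entire} memory term over $[t_m,t_n]$ becomes a tail, bounded by $\f{K\delta}{1-\al}\cdot K\cdot\f{(1-\al)e^{-2Q}}{2K^2}=\f{1}{2}\delta e^{-2Q}$, while the free term satisfies $\delta\exp\ld-\il_{t_m}^{t_n}a_i(u)du\rd\geq \delta e^{-2Q}$ because $\abs{\il_0^{t_n}a_i(s)ds}\leq Q$; hence $\abs{\vect{x}(t_n)}\geq\f{1}{2}\delta e^{-2Q}$, contradicting attractivity. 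In short, your tail estimates are the right ingredient, but they must be deployed by restarting the solution at a large time $t_m$ --- converting the problematic, fixed-size head contribution into a controllable tail --- rather than by trying to beat the head with a small $\delta$ at $t_0=0$.
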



To study the asymptotic stability of the positive periodic solution $\vect{x}^\ast(t)$ of (\ref{SYSTEM}), let $\vect{y}(t)=\vect{x}(t)-\vect{x}^\ast(t),$ then
\begin{equation}\label{SYSTEM1}
\vect{y}'(t)=\vect{x}'(t)-(\vect{x}^{\ast}(t))'= -\vect{A}(t)\vect{y}(t) + \vect{B}(t)\vect{G}(\vect{y}_t),
\end{equation}
where
\be\label{capitalgfunctional}
\vect{G}(\phi):=\vect{F}(\phi+\vect{x}_t^*)-\vect{F}(\vect{x}_t^*).
\ee
The asymptotic stability of $\vect{x}^\ast(t)$ is equivalent to that of the zero solution of (\ref{SYSTEM1}).

\setcounter{equation}{0}
\setcounter{figure}{0}
\section{Application in Feedback Systems}\label{feedbacksyssec}
In this section, we consider the following system
\begin{equation}\label{Model1}
\left\{\begin{array}{l}
\displaystyle x' =  af(y)-bx\\
\displaystyle y' =  ce^{-(x+y)}-h(t)y,
\end{array}\right.
\end{equation}
where $h(t)$ is an $\o$-periodic function with $\bar{h} >0$,
$$f(y)=\f{\theta^2}{\theta^2+y^2}~~\hbox{or}~~f(y)=\f{y^2}{\theta^2+y^2},$$
and $a,b,c,\omega, \theta>0$. In the framework of \eqref{SYSTEM},
$$
\vect{A}(t)=\left(\ba{cc}b&0\\ 0&h(t)\ea\right),~~ \vect{B}(t)=\left(\ba{cc}a&0\\ 0&c\ea\right),~~\vect{F}(\phi)=\left(\ba{c}f(\phi_2(0))\\ \exp\{-(\phi_1(0)+\phi_2(0))\}\ea\right).$$

In 1977, Mackey and Glass \cite{MACKEYGLASS} considered a homogeneous population of mature circulating cells of density with two models of different types
$$\f{dP}{dt}=\f{\beta_0 \theta^n}{\theta^n+P_\tau^n}-\gamma P$$
and$$\f{dP}{dt}=\f{\beta_0 P_\tau^n}{\theta^n+P_\tau^n}-\gamma P.$$
System (\ref{Model1}) is said to be a negative growth feedback system if $f(y)=\theta^2/(\theta^2+y^2)$, and is called positive growth feedback system if  $f(p)=y^2/(\theta^2+y^2)$. \cite{Smith1995} includes extensive discussions on various results for the feedback systems.  System (\ref{Model1}) is an natural extension from the two models in \cite{MACKEYGLASS} when some parameters become periodic.  We apply our results for the existence and stability  to the positive periodic solutions of system (\ref{Model1}).  System (\ref{Model1}) with a delay can be discussed accordingly.

\subsection{Negative growth feedback system}
If $f(y)=\theta^2/(\theta^2+y^2)$, then one has
$$
\ba{c}
f_0^1=\infty,~~f_\infty^1=0,~~f_0^2=\infty,~~f_\infty^2=0,~~\vect{F}_0=\infty,~~\vect{F}_\infty=0,~~i_0=i_{\infty}=1.
\ea
$$
By Theorem \ref{th2}, we conclude that (\ref{Model1}) has one positive $\omega$-periodic solution $\vect{x}^\ast(t)=(x^\ast(t),y^\ast(t))$ (see Figure \ref{figexp2}) (a) and (b), respectively).

Next, we study the asymptotic stability of the positive $\omega$-periodic solution $\vect{x}^\ast(t)=(x^\ast(t), y^\ast(t))$ of (\ref{Model1}).  Let $\phi=(\phi_1, \phi_2)$, $\vect{G}(\phi)$ defined in \eqref{capitalgfunctional} now reads
$$\vect{G}(\phi)=\lx\f{\theta^2}{\theta^2+(\phi_2(0)+y^\ast(t))^2}-\f{\theta^2}{\theta^2+(y^\ast(t))^2}, \exp\{-[\phi_1(0)+\phi_2(0)]\}\rx^T.$$
Then,
\begin{equation*}
\ba{rcl}
\abs{\vect{G}(\phi)-\vect{G}(\psi)}
&=&\abs{\f{\theta^2}{\theta^2+(\phi_2(0)+y^\ast(t))^2}-\f{\theta^2}{\theta^2+(\psi_2(0)+y^\ast(t))^2}}\\\\
&&+\abs{\exp\{-[\phi_1(0)+\phi_2(0)]\}-\exp\{-[\psi_1(0)+\psi_2(0)]\}}\\\\
&\leq& \f{1}{\theta^2}\abs{\phi_2(0)-\psi_2(0)}\cdot \abs{\phi_2(0)+\psi_2(0)+2y^\ast(t)}\\\\
&&+\abs{\exp\{-[\phi_1(0)+\phi_2(0)]\}-\exp\{-[\psi_1(0)+\psi_2(0)]\}}\\\\
&\leq& \f{2L+2\hat{y}^*}{\theta^2}\abs{\phi_2(0)-\psi_2(0)}+
e^L(\abs{\phi_1(0)-\psi_1(0)}+\abs{\phi_2(0)-\psi_2(0)})\\
&\leq& \max\ld\f{2L+2\hat{y}^*}{\theta^2},e^L\rd\cdot(\abs{\phi_2(0)-\psi_2(0)}+\abs{\phi_1(0)-\psi_1(0)})\\
&\leq& \max\ld\f{2L+2\hat{y}^*}{\theta^2},e^L\rd\cdot\norm{\phi-\psi}\\
\ea
\end{equation*}
for $\phi, \psi\in \mathscr{C}_L$, where $\hat{y}^*=\max\limits_{t\in[0,\omega]}y^*(t)$.
That is, (H4) holds. Moreover,  (H5) reads
$$\max\ld\f{2L+2\hat{y}^*}{\theta^2},e^L\rd \il_0^t \exp\{-\il_s^t bdu\}ads\leq \alpha <1$$
and
$$\max\ld\f{2L+2\hat{y}^*}{\theta^2},e^L\rd \il_0^t \exp\{-\il_s^t h(u)du\}cds\leq \alpha <1,$$
i.e.,
\begin{equation}\label{feedbackcondd}
\ba{c}
a\max\ld\f{2L+2\hat{y}^*}{\theta^2},e^L\rd\il_0^t \exp\{b(s-t)\}ds\leq \alpha <1,\\
c\max\ld\f{2L+2\hat{y}^*}{\theta^2},e^L\rd\il_0^t \exp\ld-\il_s^t h(u)du\rd ds\leq \alpha <1.
\ea
\end{equation}
Then, by Theorem \ref{STH}, the positive $\omega$-periodic solution $\vect{x}^\ast(t)$ of (\ref{Model1}) is asymptotically stable if $a$ and $c$ are small enough so that \eqref{feedbackcondd} is satisfied (see Figure \ref{figexp3}).

\subsection{Positive growth feedback system}
If $f(y)=y^2/(\theta^2+y^2)$, then
$$
\ba{c}
f_0^1=0,~~f_\infty^1=0,~~f_0^2=\infty,~~f_\infty^2=0,~~\vect{F}_0=\infty,~~\vect{F}_\infty=0,~~i_0=i_{\infty}=1.
\ea
$$
Then, by Theorem \ref{th2},  \eqref{Model1} has one positive periodic solution.

For the stability of the positive solution of (\ref{Model1}), we first have
\begin{equation*}
\ba{rcl}
\abs{\vect{G}(\phi)-\vect{G}(\psi)}
&=&\abs{\f{(\phi_2(0)+y^\ast(t))^2}{\theta^2+(\phi_2(0)+y^\ast(t))^2}-\f{(\psi_2(0)+y^\ast(t))^2}{\theta^2+(\psi_2(0)+y^\ast(t))^2}}\\\\
&&+\abs{\exp\{-[\phi_1(0)+\phi_2(0)]\}-\exp\{-[\psi_1(0)+\psi_2(0)]\}}\\\\
&\leq& \f{1}{\theta^2}\abs{\phi_2(0)-\psi_2(0)}\cdot \abs{\phi_2(0)+\psi_2(0)+2y^\ast(t)}\\\\
&&+\abs{\exp\{-[\phi_1(0)+\phi_2(0)]\}-\exp\{-[\psi_1(0)+\psi_2(0)]\}}\\\\
&\leq& \max\ld\f{2L+2\hat{y}^*}{\theta^2},e^L\rd\cdot\norm{\phi-\psi}
\ea
\end{equation*}
for $\phi, \psi\in \mathscr{C}_L$. Then (H4) holds. Moreover, similar to the case of negative growth feedback, if \eqref{feedbackcondd} is satisfied, then $\vect{x}^\ast(t)$ of (\ref{Model1}) is asymptotically stable (see Fig. \ref{figexp12pic}).

To conclude, for either negative or positive growth feedback, when $a, c$ are small, (\ref{Model1}) has a positive $\omega$-periodic solution and is asymptotically stable. For large $a$ and $c$, numerical simulation shows that, although the stability criteria does not work, (\ref{Model1}) can still admit a positive periodic solution being asymptotically stable (see Figure \ref{figexp7}).

\setcounter{equation}{0}
\setcounter{figure}{0}
\section{Application in Delayed Periodic System}\label{fdeexpsec}
In this section, we present an example with delay to illustrate our main results.

Consider the following system of delay differential equations
\begin{equation}\label{eqexam}
\left\{ \begin{array}{ccc}
\dot{x}_1(t)=-a_1(t)x_1(t)+\lambda b_1(t)\exp\{-x_1(t-\tau)-x_2(t-\tau)\},\\
\dot{x}_2(t)=-a_2(t)x_2(t)+\lambda
b_2(t)\exp\{-x_1(t-\tau)-x_2(t-\tau)\},
\end{array} \right.
\end{equation}
where
$$\begin{array}{ccc}
a_1(t)=5+\sin(2\pi t),&~~b_1(t)=1+0.6\cos(2\pi t),\\
a_2(t)=5+\cos(2\pi t),&~~b_2(t)=1+0.5\sin(2\pi t).
\end{array}
$$

In 1974, Chow \cite{CHOW1} investigated the existence of positive periodic solution of the following difference-differential equation
$$N'(t)=-\mu N(t)+\rho \exp\{-\gamma N(t-r)\}$$
which is originally introduced by Wazewska-Czyzewska and Lasota
\cite{WAZLASTA} modeling the survival of red blood cells.

It is easily checked that
$$
\ba{c}
\bar{a}_i=5,~~\sigma_i=e^{-5},~~\sigma=e^{-5},~~\bar{b}_i=1,~~M(1)=1,~~m(1)=e^{-1},\\\Gamma=(e^5-1)^{-1},~~\chi=2e^5(e^5-1)^{-1},~~f_0^i=\infty,~~f_\infty^i=0
\ea
$$
Hence, $i_{\infty}=i_0=1$. By Theorem \ref{th2}, we can conclude that (\ref{eqexam}) has one positive 1-periodic solution $\vect{x}^\ast(t)$  for sufficiently small (see Figure \ref{fkwy21fig}(a) and \ref{fkwy22fig}) or large $\lambda$ (see Figure \ref{fkwy21fig}(b) and Figure \ref{fkwy23fig}), i.e.,
$$0<\lambda<\f{1}{M(1)\chi}=\f{e^5-1}{2e^5}\thickapprox 0.5~~\hbox{or}~~\lambda>\f{1}{m(1)\Gamma}=e(e^5-1)\thickapprox 400.7.$$

Next, we turn to study the stability of the positive solution of (\ref{eqexam}). Note that
\begin{equation*}
\begin{split}
\abs{\vect{G}(\vect{\phi})-\vect{G}(\vect{\psi})}
&=2e^{-(x_1^\ast(t-\tau)+x_2^\ast(t-\tau))} \abs{e^{-(\phi_1(-\tau)+\phi_2(-\tau))}
-e^{-(\psi_1(-\tau)+\psi_2(-\tau))}}\\
&\leq 2\abs{e^{-(\phi_1(-\tau)+\phi_2(-\tau))}
-e^{-(\psi_1(-\tau)+\psi_2(-\tau))}}\\
&\leq 2\norm{\phi-\psi}.
\end{split}
\end{equation*}
Then, (H4) holds. Moreover, for $t\geq 0$, if
\begin{equation}\label{conditioneq}
\ba{c}
2\lambda\il_0^t\exp\ld-\il_s^t(5+\sin(2\pi u))du\rd(1+0.6\cos(2\pi s))ds\leq\alpha<1\\
2\lambda\il_0^t \exp\ld-\il_s^t(5+\cos(2\pi u))du\rd(1+0.5\sin(2\pi s))ds\leq\alpha<1
\ea
\end{equation}
i.e.,
$$2\lambda \exp\ld-(5t+\f{\cos2\pi t}{2\pi})\rd\il_0^t \exp\ld 5s+\f{\cos2\pi s}{2\pi}\rd(1+0.6\cos2\pi s)ds\leq\alpha<1$$
and
$$2\lambda \exp\ld-(5t+\f{\sin2\pi t}{2\pi})\rd\il_0^t \exp\ld 5s+\f{\sin2\pi s}{2\pi}\rd(1+0.5\sin2\pi s)ds\leq\alpha<1$$
are satisfied, then $\vect{x}^\ast(t)$ is asymptotically stable (see Figure \ref{fkwy22fig}).

In summary, we conclude that, for $\lambda$ small enough so that \eqref{conditioneq} is satisfied, system (\ref{eqexam}) admits a positive periodic solution being asymptotically stable  (see Figure \ref{fkwy22fig}). For a large $\lambda$ not satisfying \eqref{conditioneq}, although the stability criteria does not work, the numerical simulations show that (\ref{eqexam}) has a positive periodic solution being asymptotically stable (see Figure \ref{fkwy23fig}).

By numerical simulations, we could conclude that, for small $\lambda$, the delay $\tau$ doesn't change the amplitude and period of the positive periodic solution of \eqref{eqexam}, but, for large $\lambda$, $\tau$ can change the period (see Figure \ref{delayeffectexmp}).

\section{Appendix}\label{appendixsec}
In the following, we devote to expounding the proof of the main findings on periodicity and asymptotic stability of \eqref{eq1}.

\subsection{Proof of periodicity}\label{proof periodicity}
In this appendix we give the proof of Theorem \ref{th2} and Theorem \ref{th4}. Our arguments as in \cite{CHENGZHANG,JIANGWEI2002,ReganWang2005,WANJIANG1,YE}
are based on a well-known fixed point theorem (Lemma~\ref{lm1}).
Such a method also has been employed in Wang \cite{HW,HWJMAA1} to
prove analogous results for the existence, multiplicity and
nonexistence of positive solutions of boundary value problems. We transform (\ref{eq1}) into a system of integral equations, and
then to the fixed point problem of an equivalent operator in a
cone. We establish several inequalities which allow us to
estimate the operator. Further, we apply the fixed
point index to show the existence, multiplicity and nonexistence
of positive $\o$-periodic solutions of (\ref{eq1}) based on the
inequalities.

\subsubsection{Preliminaries}
We recall some concepts and conclusions on the
fixed point index in a cone in \cite{DEIMING,GUOL,KRAS}. Let $X$
be a Banach space and $K$ be a closed, nonempty subset of $X$. $K$
is said to be a cone if $(i)$~$\alpha u+\beta v\in K $ for all
$u,v\in K$ and all $\alpha,\beta>0$ and $(ii)$~$u,-u\in K$ imply
$u=0$. Assume $\O$ is a bounded open subset in $K$ with the
boundary $\partial \O$, and let $T: K \cap \overline{\O} \to K$ is
completely continuous such that $Tx  \neq x $ for $x \in \partial
\O \cap K$, then the fixed point index $i(T, K \cap \O, K)$ is
defined. If $i(T, K \cap \O, K) \neq 0$, then $T$ has a fixed
point in $K \cap \O$. The following well-known result of the fixed
point index is crucial in our arguments.

\begin{lemma}\label{lm1} {\rm (\cite{DEIMING,GUOL,KRAS}).} Let $X$ be a
Banach space and $K$ a cone in $X$. For $r >0$, define $K_r =\{x
\in K: \|x\| < r \}$. Assume that $T: \overline{K}_r \to K$ is
completely continuous such that $Tx  \neq x $ for $x \in \partial
K_r= \{x \in K: \|x\| = r \}$.

     {\rm(i)} If $\|Tx\| \geq \|x\|$ for $x \in \partial K_r$, then $i(T, K_r, K)=0.$

     {\rm(ii)} If $\|Tx\| \leq \|x\|$ for $x \in \partial K_r$, then $i(T, K_r, K)=1.$
\end{lemma}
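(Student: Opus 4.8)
The plan is to deduce both statements directly from the three defining properties of the fixed point index in a cone recalled from \cite{DEIMING,GUOL,KRAS}: normalization, homotopy invariance, and the solution/excision property. In each case I would construct an admissible homotopy on $\overline{K}_r$ whose admissibility on $\partial K_r$ is forced by the relevant norm inequality, and then evaluate the index at a convenient endpoint.

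For (ii) I would take the scalar homotopy $h(t,x)=tTx$ for $t\in[0,1]$. The only delicate point is admissibility on the boundary: if $tTx=x$ for some $x\in\partial K_r$ and $t\in[0,1]$, then $\norm{x}=t\norm{Tx}\leq t\norm{x}=tr$, so $t<1$ would force $\norm{x}=0$, impossible since $\norm{x}=r>0$, while $t=1$ gives $Tx=x$, contradicting the hypothesis $Tx\neq x$ on $\partial K_r$. Hence $h$ is an admissible homotopy, and by homotopy invariance $i(T,K_r,K)=i(h(1,\cdot),K_r,K)=i(h(0,\cdot),K_r,K)=i(\hat 0,K_r,K)$, where $\hat 0$ is the constant map carrying $\overline{K}_r$ to $0\in K_r$; normalization then assigns this last index the value $1$.

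For (i) I would fix any $e\in K$ with $\norm{e}=1$ (the statement is vacuous when $K=\{0\}$) and use the additive homotopy $h(t,x)=Tx+te$ for $t\geq 0$. Since $T$ is completely continuous, $M:=\sup_{x\in\overline{K}_r}\norm{Tx}<\infty$, and for $t>r+M$ any relation $Tx+te=x$ would give $t=\norm{x-Tx}\leq\norm{x}+\norm{Tx}\leq r+M$, a contradiction; thus $h(t,\cdot)$ is fixed-point free on $\overline{K}_r$ for all large $t$ and so has index $0$ there. It then suffices to show that $h(t,\cdot)$ has no fixed point on $\partial K_r$ for every $t\geq 0$, for then homotopy invariance propagates the value $0$ back along the homotopy to $h(0,\cdot)=T$.

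This boundary admissibility in (i) is the main obstacle: the expansion condition only supplies the lower bound $\norm{Tx}\geq\norm{x}=r$, and one must still exclude $Tx+te=x$ for intermediate $t>0$. I would resolve it by exploiting the cone structure together with that lower bound. In the present application the norm $\norm{\vect{x}}=\sum_i\abs{x_i}$ is additive along the positive cone, so $\norm{Tx+te}=\norm{Tx}+t\geq r$ with equality only at $t=0$; consequently $Tx+te=x$ forces $t=0$ and $Tx=x$, contradicting $Tx\neq x$ on $\partial K_r$, exactly as handled in the cited references. With admissibility established, homotopy invariance yields $i(T,K_r,K)=0$, which finishes the proof.
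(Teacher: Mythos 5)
The paper offers no proof of this lemma --- it is quoted verbatim from the cited monographs of Deimling, Guo--Lakshmikantham and Krasnoselskii --- so your proposal has to be judged against the classical arguments there. Your part (ii) is exactly the standard proof and is correct: the homotopy $h(t,x)=tTx$ is admissible because $x=tTx$ on $\partial K_r$ forces $r=t\|Tx\|\le t r$, and normalization at the constant map $\hat 0$ gives index $1$.

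Part (i), however, has a genuine gap, and it sits precisely at the step you yourself flag. In a general Banach space the relation $x=Tx+te$ with $t>0$, $x\in\partial K_r$ is \emph{not} excluded by $\|Tx\|\ge\|x\|$ together with $Tx\ne x$. Concretely, take $X=\mathbb{R}^2$ with the max norm, $K=\mathbb{R}^2_+$, $r=1$, $e=(0,1)$, $m(x)=\min(x_1,x_2)$, and $Tx=\bigl(2\|x\|-m(x),\,0.4\,m(x)\bigr)$: this $T$ is completely continuous, maps $\overline{K}_1$ into $K$, satisfies $\|Tx\|=2\|x\|-m(x)\ge\|x\|$ on $\partial K_1$ and has no fixed point there (the only candidate forces $x=(1,1)$ while $T(1,1)=(1,0.4)$), yet $x_0-Tx_0=0.6\,e$ at $x_0=(1,1)$, so your homotopy has a boundary zero at $t=0.6$. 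Your rescue via additivity of the norm on the cone fails twice over: first, the lemma is asserted (and used) for an arbitrary Banach space and cone, so an application-specific norm identity cannot prove it; second, the identity is false even in this paper's application, since $X$ carries the norm $\|x\|=\sum_{i}\sup_{t\in[0,\omega]}|x_i(t)|$ and on the cone one only has $\sup_t\bigl(u_i(t)+v_i(t)\bigr)\le\sup_t u_i(t)+\sup_t v_i(t)$, generally strict when the suprema occur at different times, so $\|Tx+te\|=\|Tx\|+t$ does not hold. The classical route avoids the translate homotopy altogether: from $\|Tx\|\ge\|x\|=r$ and $Tx\ne x$ on $\partial K_r$ one deduces (a) $Tx\ne\mu x$ for all $\mu\in(0,1]$ on $\partial K_r$ (if $Tx=\mu x$ with $\mu<1$ then $\|Tx\|=\mu\|x\|<\|x\|$, and $\mu=1$ is excluded by hypothesis) and (b) $\inf_{x\in\partial K_r}\|Tx\|\ge r>0$; these are exactly the hypotheses of the index-zero criterion proved in Guo--Lakshmikantham, and (i) follows from it. Everything else in your part (i) --- the large-$t$ nonexistence bound, the solution property, homotopy invariance --- is fine, so the proof is repaired by rerouting the boundary admissibility through that criterion rather than through a fixed translate direction $e$.
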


In order to apply Lemma \ref{lm1} to (\ref{eq1}), let $X$ be the
Banach space defined by
$$X=\{\vect{x}(t)\in C(\mathbb{R},\mathbb{R}^n):
\vect{x}(t+\o)=\vect{x}(t), t \in \mathbb{R}, i=1,\dots,n\}$$ with
a norm $\displaystyle{\norm{\vect{x}}= \sum_{i=1}^n
\sup_{t\in[0,\o]} \abs{x_i(t)}},$ for $\vect{x}=(x_1,...,x_n) \in
X.$  For $\vect{x} \in X$ or $\mathbb{R}^n_+$, $\norm{\vect{x}}$
denotes the norm of $\vect{x}$ in $X$ or $\mathbb{R}^n_+$,
respectively.

Define
$$
K = \{\vect{x}=(x_1,\dots,x_n) \in X: x_i(t) \geq \s_i \sup_{t\in[0,\o]} \abs{x_i(t)}, i=1,\dots,n, t \in [0, \o] \}.
$$
It is clear $K$ is cone in $X$. For $r>0$, define $\O_r =
\{\vect{x} \in K: \norm{\vect{x}} < r \}. $ It is clear that
$\partial \O_r = \{\vect{x} \in K: \norm{\vect{x}}=r\}$. Let
$\vect{T}_{\l}: K \to X$ be a map with components
$(T_{\l}^1,\dots,T_{\l}^n)$:
\begin{equation}\label{T_def}
T_{\l}^i\vect{x}(t) = \l \il^{t+\o}_t G_i(t, s) b_i(s)
f^i(\vect{x}(s-\t(s)))ds,~~i=1,\dots,n,
\end{equation}
where
$$
G_i(t,s)=\f{1}{\s^{-1}_i-1}\exp\ld\il_t^s a_i(\theta)d\theta\rd
$$
satisfying
$$
\f{1}{\s^{-1}_i-1} \leq G_i(t,s) \leq \f{\s^{-1}_i}{\s^{-1}_i-1},~~t \leq s \leq t+\o.
$$
\begin{lemma}\label{lm-compact}
Assume \rm{(H1)-(H2)} hold. Then $\vect{T} _{\l}(K) \subset K$ and $\vect{T}_{\l}: K \to K$ is compact and continuous.
\end{lemma}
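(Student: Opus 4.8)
The plan is to verify the three claims of the lemma separately: that each $T_\l^i\vect{x}$ is $\o$-periodic, so that $\vect{T}_\l\vect{x}\in X$; that $\vect{T}_\l\vect{x}$ lies in the cone $K$; and that $\vect{T}_\l$ is compact and continuous. First I would check periodicity. The crucial observation is that, since $b_i$, $\t$ and $\vect{x}$ are all $\o$-periodic, the integrand $b_i(s)f^i(\vect{x}(s-\t(s)))$ is $\o$-periodic in $s$, while the kernel satisfies $G_i(t+\o,s+\o)=G_i(t,s)$, which follows from the periodicity of $a_i$ through the substitution $\theta\mapsto\theta+\o$ in $\int_t^s a_i(\theta)d\theta$. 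Changing variables $s\mapsto s+\o$ in the integral defining $T_\l^i\vect{x}(t+\o)$ then collapses it onto $T_\l^i\vect{x}(t)$, giving periodicity.

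For the cone invariance I would exploit the same periodicity: because the integrand is $\o$-periodic in $s$, the integral $\int_t^{t+\o}b_i(s)f^i(\vect{x}(s-\t(s)))ds$ is independent of $t$; call its value $I_i$. Using the two-sided bound $(\s_i^{-1}-1)^{-1}\leq G_i(t,s)\leq \s_i^{-1}(\s_i^{-1}-1)^{-1}$ together with $b_i,f^i\geq 0$ from (H1)--(H2), one obtains $\l(\s_i^{-1}-1)^{-1}I_i\leq T_\l^i\vect{x}(t)\leq \l\s_i^{-1}(\s_i^{-1}-1)^{-1}I_i$ for every $t$. Since $T_\l^i\vect{x}(t)\geq 0$, taking the supremum over $t$ in the upper bound and comparing with the lower bound yields $T_\l^i\vect{x}(t)\geq \s_i\sup_{t\in[0,\o]}\abs{T_\l^i\vect{x}(t)}$, which is exactly the defining condition of $K$ --- the factor $\s_i$ appearing precisely as the ratio of the two Green's-function bounds.

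For complete continuity I would restrict to a bounded set $\{\vect{x}\in K:\norm{\vect{x}}\leq r\}$ and invoke Arzel\`a--Ascoli. Uniform boundedness is immediate from the upper estimate above, since $I_i\leq M(r)\bar{b}_i\o$. For equicontinuity the cleanest route is to differentiate: by the Leibniz rule the boundary terms of $T_\l^i\vect{x}(t)$ combine --- using $G_i(t,t)=(\s_i^{-1}-1)^{-1}$, $G_i(t,t+\o)=\s_i^{-1}(\s_i^{-1}-1)^{-1}$ and periodicity --- to give exactly $\l b_i(t)f^i(\vect{x}(t-\t(t)))$, while $\partial_t G_i=-a_i(t)G_i$ contributes $-a_i(t)T_\l^i\vect{x}(t)$, so that $(\vect{T}_\l\vect{x})'=-\vect{A}(t)\vect{T}_\l\vect{x}(t)+\l\vect{B}(t)\vect{F}(\vect{x}(t-\t(t)))$. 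This derivative is uniformly bounded on the bounded set, which gives equicontinuity. Continuity of $\vect{T}_\l$ follows from the uniform continuity of each $f^i$ on the compact set $\{\norm{\vect{x}}\leq r\}\subset\R_+^n$: if $\vect{x}_k\to\vect{x}$ then $f^i(\vect{x}_k(s-\t(s)))\to f^i(\vect{x}(s-\t(s)))$ uniformly in $s$, so the integrals converge.

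None of these steps is genuinely difficult; the one point demanding care is the cone-invariance estimate, where one must recognize the $t$-independence of $I_i$ in order to move the supremum past the integral cleanly. This is what forces the cone constant $\s_i$ to coincide exactly with the ratio of the upper and lower bounds on $G_i$, and it is the reason the cone $K$ was defined with precisely this weight. The equicontinuity computation via the Leibniz rule is the most calculation-heavy part but is entirely mechanical.
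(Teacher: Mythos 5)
Your proof is correct and follows essentially the same route as the paper: the same change-of-variables argument for periodicity, and the same cone-invariance estimate exploiting the $t$-independence of $\int_t^{t+\o}b_i(s)f^i(\vect{x}(s-\t(s)))ds$ together with the ratio $\s_i$ of the two bounds on $G_i$. The paper simply declares compactness and continuity ``easy to show,'' so your Arzel\`a--Ascoli argument (with equicontinuity via the Leibniz-rule derivative formula, which the paper itself computes in the proof of Lemma~\ref{lm-fixed-equation-equal}) is a correct filling-in of details rather than a different approach.
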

\begin{proof}
In view of the definition of $K$, for $\vect{x} \in K$, we have, $i=1,\dots,n,$
\begin{equation*}
\begin{split}
(T_{\l}^i\vect{x})(t+\o)
& =\l \il^{t+2\o}_{t+\o} G_i(t+\o, s) b_i(s) f^i(\vect{x}(s-\t(s)))ds \\
& =\l \il^{t+\o}_{t} G_i(t+\o, \theta+\o) b_i(\theta+\o) f^i(\vect{x}(\theta+\o-\t(\theta+\o)))d\theta \\
& =\l \il^{t+\o}_{t} G_i(t, s) b_i(s) f^i(\vect{x}(s-\t(s)))ds \\
& = (T_{\l}^i\vect{x})(t).
\end{split}
\end{equation*}
It is easy to see that $\il^{t+\o}_{t} b_i(s) f^i(\vect{x}(s-\t(s)))ds$ is a constant because of the periodicity of $b_i(t) f^i(\vect{x}(t-\t(t)))$.
One can show that, for $\vect{x} \in K$ and $t \in [0,\o]$, $i=1,\dots,n,$
\begin{equation*}
\begin{split}
T_{\l}^i\vect{x}(t)
& \geq \f{1}{\s^{-1}_i-1}\l \il^{t+\o}_{t} b_i(s) f^i(\vect{x}(s-\t(s)))ds \\
& = \f{1}{\s^{-1}_i-1}\l \il^{\o}_{0} b_i(s) f^i(\vect{x}(s-\t(s)))ds \\
& = \s_i \f{\s^{-1}_i}{\s^{-1}_i-1} \l \il^{\o}_{0} b_i(s) f^i(\vect{x}(s-\t(s)))ds \\
&  \geq \s_i  \sup_{t\in[0,\o]} \abs{T_{\l}^i\vect{x}(t)}.
\end{split}
\end{equation*}
Thus $\vect{T} _{\l}(K) \subset K$ and it is easy to show that
$\vect{T}_{\l}: K \to K$ is compact and continuous. \end{proof}
\begin{lemma}\label{lm-fixed-equation-equal} Assume that \rm{(H1)-(H2)}
hold. Then $\vect{x}\in K$ is a positive periodic solution of
(\ref{eq1}) if and only if it is a fixed point of $\vect{T} _{\l}$
in $K.$

\end{lemma}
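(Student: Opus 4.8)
The plan is to prove the equivalence componentwise, since (\ref{eq1}) decouples into the $n$ scalar linear equations $x_i'(t)=-a_i(t)x_i(t)+g_i(t)$ with the $\o$-periodic forcing $g_i(t):=\l b_i(t)f^i(\vect{x}(t-\t(t)))$ that becomes a known function once $\vect{x}$ is frozen. The heart of the matter is the variation-of-constants representation of the unique $\o$-periodic solution of such a scalar equation, which I would run in both directions; note first that any $\vect{x}$ appearing on either side already lives in $X$ and is therefore automatically $\o$-periodic, and that membership in $K$ forces $x_i(t)\ge\s_i\sup_{t\in[0,\o]}\abs{x_i(t)}\ge0$, so the sign and periodicity requirements of a positive solution are built in.

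For the direction ``positive $\o$-periodic solution $\Rightarrow$ fixed point'', I would take such an $\vect{x}$ solving (\ref{eq1}), multiply the $i$-th equation by the integrating factor $\exp\ld A_i(t)\rd$ with $A_i(t):=\il_0^t a_i(\theta)d\theta$ so that the left-hand side becomes $\f{d}{dt}\lx x_i(t)e^{A_i(t)}\rx$, and integrate over one period $[t,t+\o]$. Using $x_i(t+\o)=x_i(t)$ together with $A_i(t+\o)=A_i(t)+\bar a_i\o$ and $e^{\bar a_i\o}=\s_i^{-1}$, the two endpoint terms collapse to the factor $\s_i^{-1}-1$, and solving for $x_i(t)$ reproduces exactly $x_i(t)=\l\il_t^{t+\o}G_i(t,s)b_i(s)f^i(\vect{x}(s-\t(s)))ds=T_\l^i\vect{x}(t)$. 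Hence $\vect{x}=\vect{T}_\l\vect{x}$.

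For the converse, given a fixed point $\vect{x}=\vect{T}_\l\vect{x}$ in $K$, I would verify by direct differentiation that each $x_i=T_\l^i\vect{x}$ satisfies the $i$-th scalar equation. Writing $T_\l^i\vect{x}(t)=\f{1}{\s_i^{-1}-1}e^{-A_i(t)}\il_t^{t+\o}e^{A_i(s)}g_i(s)ds$ and applying the product and Leibniz rules, differentiating the prefactor $e^{-A_i(t)}$ yields the term $-a_i(t)T_\l^i\vect{x}(t)$, while the variable limits $t$ and $t+\o$ contribute two boundary terms; the same periodicity identities $A_i(t+\o)=A_i(t)+\bar a_i\o$ and $g_i(t+\o)=g_i(t)$ make these combine into $(\s_i^{-1}-1)g_i(t)$, so that $\f{d}{dt}T_\l^i\vect{x}(t)=-a_i(t)T_\l^i\vect{x}(t)+g_i(t)$, which is precisely (\ref{eq1}). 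Finally, for a nontrivial fixed point some component satisfies $\sup_{t\in[0,\o]}\abs{x_j(t)}>0$, whence $x_j(t)\ge\s_j\sup_{t\in[0,\o]}\abs{x_j(t)}>0$ on $\R$, so $\vect{x}$ is a genuine positive $\o$-periodic solution.

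The only delicate step is the differentiation in the converse direction: one must apply the Leibniz rule correctly to an integral whose limits and whose prefactor $e^{-A_i(t)}$ all depend on $t$, and then use $A_i(t+\o)=A_i(t)+\bar a_i\o$ together with the $\o$-periodicity of $s\mapsto b_i(s)f^i(\vect{x}(s-\t(s)))$ to check that the boundary contributions telescope into exactly the forcing term $g_i(t)$. Everything else --- the passage to components, the integrating-factor computation, and the cone estimates --- is a routine unwinding of the definitions of $G_i$, $\s_i$, and the norm on $K$, with $\vect{T}_\l(K)\subset K$ already supplied by Lemma~\ref{lm-compact}.
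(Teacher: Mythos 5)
Your proposal is correct and is essentially the paper's own argument: both use the Leibniz-rule differentiation of $T_\l^i\vect{x}$ (with $\partial_t G_i(t,s)=-a_i(t)G_i(t,s)$ and $G_i(t,t+\o)-G_i(t,t)=1$) for the fixed-point-to-solution direction, and your integrating-factor computation over $[t,t+\o]$ is line-by-line equivalent to the paper's substitution of $\l b_i(t)f^i(\vect{x}(t-\t(t)))=a_i(t)x_i(t)+x_i'(t)$ into the operator followed by integration by parts, after multiplying through by $e^{-A_i(t)}/(\s_i^{-1}-1)$. Your closing remark on nontriviality and the cone inequality supplying positivity matches the paper's appeal to the proof of Lemma~\ref{lm-compact}.
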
 \begin{proof} If $\vect{x}=(x_1,\dots,x_n) \in K$ and
$\vect{T}_{\l}\vect{x}=\vect{x}$, then, for $i=1,\dots,n,$
\begin{equation*}
\begin{split}
x_i'(t)
& = \f{d}{dt} (\l \il^{t+\o}_t G_i(t, s) b_i(s) f^i(\vect{x}(s-\t(s)))ds)\\
& =  \l G_i(t, t+\o) b_i(t+\o)f^i(\vect{x}(t+\o-\t(t+\o))\\
& \quad - \l G_i(t,t)b_i(t)f^i(\vect{x}(t-\t(t)))
- a_i(t) T_{\l}^ix(t) \\
& =  \l [G_i(t, t+\o)-G_i(t,t)]b_i(t)f^i(\vect{x}(t-\t(t)))-a_i(t) T_{\l}^ix(t)\\
& = - a_i(t) x_i(t)+ \l b_i(t)f^i(\vect{x}(t-\t(t))).
\end{split}
\end{equation*}
Thus $\vect{x}$ is a positive $\o$-periodic solution of (\ref{eq1}). On the other hand, if $\vect{x}=(x_1,\dots,x_n)$ is a positive $\o$-periodic function, then
$ \l b_i(t)f^i(\vect{x}(t-\t(t)))= a_i(t) x_i(t)+x_i'(t)$ and
\begin{equation*}
\begin{split}
T_{\l}^i\vect{x}(t)
& = \l \il^{t+\o}_t G_i(t, s) b_i(s) f^i(\vect{x}(s-\t(s)))ds\\
& =  \il^{t+\o}_t G_i(t, s) (a_i(s) x_i(s)+x_i'(s))ds \\
& =  \il^{t+\o}_t G_i(t, s) a_i(s) x_i(s)ds +\il^{t+\o}_t G_i(t, s) x'_i(s)ds \\
& =  \il^{t+\o}_t G_i(t, s) a_i(s) x_i(s)ds + G_i(t, s) x_i(s)|^{t+\o}_t - \il^{t+\o}_t G_i(t, s) a_i(s) x_i(s)ds \\
& = x_i(t).
\end{split}
\end{equation*}
Thus, $\vect{T}_{\l}\vect{x}=\vect{x}$, Furthermore, in view of
the proof of Lemma \ref{lm-compact}, we also have $x_i(t) \geq
\s_i \sup_{t\in[0,\o]} x_i(t)$ for $t \in [0,\o].$ That is,
$\vect{x}$ is a fixed point of $\vect{T}_{\l}$ in $K$. \end{proof}

\begin{lemma}\label{f_estimate_>*}
Assume that \rm{(H1)-(H2)} hold. For
any $\eta > 0$ and $ \vect{x}=(x_1, \dots, x_n) \in K $, if there
exists a component $f^i$ of $\vect{F}$ such that
\mbox{$f^i(\vect{x}(t)) \geq \sum_{j=1}^n x_j(t)\eta$ } for $ t
\in [0, \o]$, then $ \norm{\vect{T}_{\l}\vect{x}} \geq \l \s \G
\eta \norm{\vect{x}}. $
\end{lemma}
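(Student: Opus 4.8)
The plan is to bound $\norm{\vect{T}_{\l}\vect{x}}$ below by the contribution of the single distinguished component $T_{\l}^i$ and then chain together the structural lower bound on the kernel $G_i$, the cone inequality defining $K$, and the growth hypothesis on $f^i$. Since $\vect{T}_{\l}\vect{x}\in K$ by Lemma \ref{lm-compact}, each component $T_{\l}^j\vect{x}$ is nonnegative, so $\norm{\vect{T}_{\l}\vect{x}}=\sum_{j=1}^n\sup_{t}\abs{T_{\l}^j\vect{x}(t)}\geq\sup_t T_{\l}^i\vect{x}(t)$. It therefore suffices to produce a lower bound on $T_{\l}^i\vect{x}(t)$ that is uniform in $t$.

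To do so, I would first use $G_i(t,s)\geq(\s_i^{-1}-1)^{-1}$ together with the periodicity identity already exploited in the proof of Lemma \ref{lm-compact}, namely that $\int_t^{t+\o}b_i(s)f^i(\vect{x}(s-\t(s)))\,ds=\int_0^{\o}b_i(s)f^i(\vect{x}(s-\t(s)))\,ds$, to get $T_{\l}^i\vect{x}(t)\geq\l(\s_i^{-1}-1)^{-1}\int_0^{\o}b_i(s)f^i(\vect{x}(s-\t(s)))\,ds$. Next, the hypothesis $f^i(\vect{x}(t))\geq\eta\sum_{j=1}^n x_j(t)$, which extends from $[0,\o]$ to all of $\mathbb{R}$ by $\o$-periodicity, combined with the cone estimate $x_j(s-\t(s))\geq\s_j\sup_t\abs{x_j(t)}\geq\s\sup_t\abs{x_j(t)}$, gives $f^i(\vect{x}(s-\t(s)))\geq\eta\s\norm{\vect{x}}$ uniformly in $s$. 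Substituting this and using $\int_0^{\o}b_i(s)\,ds=\bar{b}_i\o$ yields $T_{\l}^i\vect{x}(t)\geq\l\s\eta\norm{\vect{x}}\,(\s_i^{-1}-1)^{-1}\bar{b}_i\o\geq\l\s\G\eta\norm{\vect{x}}$, the last inequality replacing $(\s_i^{-1}-1)^{-1}\bar{b}_i\o$ by its minimum $\G$ over $i$. Taking the supremum over $t$ and combining with the first paragraph completes the argument.

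This is a chain of elementary estimates, so I expect no serious obstacle; the only points demanding care are the two places where global minima are invoked. I must use $\s=\min_i\s_i$ and $\G=\min_i(\s_i^{-1}-1)^{-1}\bar{b}_i\o$ rather than the component-wise quantities, so that the final bound holds for whichever index $i$ satisfies the hypothesis and is independent of that choice. I also must be careful to invoke $\o$-periodicity twice — once to collapse the integral over $[t,t+\o]$ to $[0,\o]$, and once to extend the pointwise growth hypothesis on $f^i$ from $[0,\o]$ to arbitrary arguments of the form $s-\t(s)$.
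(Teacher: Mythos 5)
Your proposal is correct and follows essentially the same route as the paper's own proof: bound $G_i$ below by $(\s_i^{-1}-1)^{-1}$, use periodicity to reduce the integral to $[0,\o]$ and to extend the growth hypothesis to arguments $s-\t(s)$, apply the cone inequality $x_j \geq \s_j \sup_t \abs{x_j(t)}$, and finish with the minima defining $\s$ and $\G$. Your explicit remark that $\norm{\vect{T}_{\l}\vect{x}} \geq \sup_t T_{\l}^i\vect{x}(t)$ because all components are nonnegative merely makes precise a step the paper leaves implicit, so there is no substantive difference.
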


\begin{proof} Since $\vect{x} \in K$ and
\mbox{$f^i(\vect{x}(t)) \geq \sum_{j=1}^n x_j(t)\eta$ } for $ t
\in [0, \o]$, we have
\begin{equation*}
\begin{split}
(T_{\l}^i\vect{x})(t)
& \geq \f{1}{\s^{-1}_i-1}\l \il^{\o}_{0} b_i(s) f^i(\vect{x}(s-\t(s)))ds \\
& \geq \f{1}{\s^{-1}_i-1}\l \il^{\o}_{0} b_i(s) \sum_{j=1}^n x_j(s-\t(s)) \eta ds \\
& \geq \f{1}{\s^{-1}_i-1}\l \il^{\o}_{0} b_i(s) ds \sum_{j=1}^n \s_j\sup_{t\in[0,\o]} x_j(t) \eta  \\
& \geq \l \eta  \norm{\vect{x}}\min\limits_{j=1,\dots,n}\{\s_j\} (\s^{-1}_i-1)^{-1}{\il^{\o}_{0} b_i(s) ds}.
\end{split}
\end{equation*}
Thus $\norm{\vect{T}_{\l}\vect{x}} \geq \l \s \G \eta
\norm{\vect{x}}$. \end{proof} \begin{lemma}\label{f_estimate_<*} Assume that
\rm{(H1)-(H2)} hold. For any $ r >0 $ and $\vect{x}=(x_1, \dots,
x_n) \in \partial\O_{r}$, if there exists an $\varepsilon > 0$
such that $f^i(\vect{x}(t)) \leq \varepsilon \sum_{j=1}^n x_j(t)$,
$i=1,\dots,n,$ for $ t \in [0, \o]$, then $
\norm{\vect{T}_{\l}\vect{x}} \leq \l \chi \varepsilon
\norm{\vect{x}} $ \end{lemma}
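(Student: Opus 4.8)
The plan is to establish the bound by a direct chain of estimates on each component $T_{\l}^i\vect{x}$, in exact parallel to the lower estimate of Lemma~\ref{f_estimate_>*}, except that here I would invoke the \emph{upper} bound $G_i(t,s)\le \f{\s^{-1}_i}{\s^{-1}_i-1}$ on the kernel rather than the lower one. Since $\vect{x}\in\partial\O_r\subset K$ we have $x_j(t)\ge 0$ and, by (H2), $f^i\ge 0$, so every $T_{\l}^i\vect{x}(t)$ is nonnegative and the absolute values in $\sup_{t\in[0,\o]}\abs{T_{\l}^i\vect{x}(t)}$ can be dropped.

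First I would fix $i$ and insert the upper kernel bound into the definition \eqref{T_def}, using the periodicity of $b_i(s)f^i(\vect{x}(s-\t(s)))$ (already noted in the proof of Lemma~\ref{lm-compact}) to replace the integral over $[t,t+\o]$ by the integral over $[0,\o]$. This gives, for every $t$,
\[
T_{\l}^i\vect{x}(t)\le \l\,\f{\s^{-1}_i}{\s^{-1}_i-1}\il_0^\o b_i(s)f^i(\vect{x}(s-\t(s)))\,ds.
\]
Next I would apply the hypothesis $f^i(\vect{x}(s-\t(s)))\le \varepsilon\sum_{j=1}^n x_j(s-\t(s))$ and bound each term $x_j(s-\t(s))\le \sup_{t\in[0,\o]}\abs{x_j(t)}$, so that $\sum_{j=1}^n x_j(s-\t(s))\le \norm{\vect{x}}$. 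Pulling this constant out of the integral and recognizing $\il_0^\o b_i(s)\,ds=\bar b_i\o$ yields $\sup_{t\in[0,\o]}\abs{T_{\l}^i\vect{x}(t)}\le \l\varepsilon\norm{\vect{x}}\,\f{\s^{-1}_i}{\s^{-1}_i-1}\bar b_i\o$.

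Summing these $n$ scalar inequalities over $i$ and using the definition of the norm on $X$ gives $\norm{\vect{T}_{\l}\vect{x}}\le \l\varepsilon\norm{\vect{x}}\sum_{i=1}^n \f{\s^{-1}_i}{\s^{-1}_i-1}\bar b_i\o$, and the sum is exactly $\chi$ by definition, which is the claim. There is essentially no analytic obstacle here; the only point requiring a moment's care is that the hypothesis is stated for $t\in[0,\o]$ whereas the integrand is evaluated at the shifted argument $s-\t(s)$. This is harmless because $\vect{x}$ is $\o$-periodic and $f^i$ depends on $t$ only through $\vect{x}$, so both sides of the inequality $f^i(\vect{x}(t))\le\varepsilon\sum_{j=1}^n x_j(t)$ are $\o$-periodic and the bound extends to all real arguments. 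The remainder is bookkeeping to match the kernel constant $\f{\s^{-1}_i}{\s^{-1}_i-1}$ and the factor $\bar b_i\o$ against the definition of $\chi$.
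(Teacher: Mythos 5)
Your proposal is correct and follows essentially the same route as the paper's proof: bound each component via the upper kernel estimate $G_i(t,s)\le \s_i^{-1}/(\s_i^{-1}-1)$, reduce to the integral over $[0,\o]$ by periodicity, apply the hypothesis with $\sum_{j=1}^n x_j(s-\t(s))\le\norm{\vect{x}}$, and sum over $i$ to recognize $\chi$. The paper compresses all of this into a two-line display; your added remark that the pointwise bound extends to the shifted argument $s-\t(s)$ by $\o$-periodicity is a detail the paper leaves implicit, not a deviation in method.
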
 \begin{proof} From the definition of $\vect{T}$, for
$\vect{x} \in \partial\O_{r}$, we have
\begin{equation*}
\begin{split}
\norm{\vect{T}_{\l}\vect{x}}
  & \leq  \sum_{i=1}^n \f{\s^{-1}_i}{\s^{-1}_i-1} \l \il^{\o}_{0} b_i(s) f^i(\vect{x}(s-\t(s)))ds  \\
  & \leq   \sum_{i=1}^n \f{\s^{-1}_i}{\s^{-1}_i-1} \l \il^{\o}_{0} b_i(s)ds  \varepsilon  \norm{\vect{x}}=  \l \chi \varepsilon \norm{\vect{x}}.
\end{split}
\end{equation*}\end{proof}
In view of the definitions of $m(r)$ and $M(r)$,  it follows that
$ M(r) \geq f^i(\vect{x}(t)) \geq  m(r)$ \; $\rm{for}\; t \in [0,
\o]$, $i=1,\dots,n$ if $\vect{x} \in \partial \O_{r}$, $r>0$ .
Thus it is easy to see that the following two lemmas can be shown
in similar manners as in Lemmas \ref{f_estimate_>*} and
\ref{f_estimate_<*}.

\begin{lemma}\label{lm8} Assume \rm{(H1)-(H2)} hold. If $ \vect{x} \in
\partial \O_{r}$, $r
>0$, then $ \norm{\vect{T}_{\l}\vect{x}}   \geq \l \G m(r). $\end{lemma}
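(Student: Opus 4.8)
The plan is to mimic the argument of Lemma \ref{f_estimate_>*}, the only new ingredient being the passage from a uniform pointwise bound $f^i(\vect{x}(t)) \geq m(r)$ to the claimed estimate. First I would record the geometric fact forced by membership in the cone: if $\vect{x} \in \partial \O_r$ then $\vect{x} \in K$ with $\norm{\vect{x}} = r$, so for every $t \in [0,\o]$ the cone inequality $x_i(t) \geq \s_i \sup_{s\in[0,\o]}\abs{x_i(s)} \geq \s \sup_{s\in[0,\o]}\abs{x_i(s)}$ gives
$$\s r = \s \norm{\vect{x}} \leq \norm{\vect{x}(t)} = \sum_{i=1}^n \abs{x_i(t)} \leq \norm{\vect{x}} = r.$$
Thus $\vect{x}(t)$ stays in the band $\s r \leq \norm{\vect{x}(t)} \leq r$ for all $t$, which is exactly the range over which $m(r)$ is taken as a minimum; consequently $f^i(\vect{x}(s-\t(s))) \geq m(r)$ for every $i$ and every $s$.

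Next I would insert this lower bound into the estimate already established inside the proof of Lemma \ref{lm-compact}, namely $(T_\l^i\vect{x})(t) \geq (\s_i^{-1}-1)^{-1}\l \il_0^\o b_i(s) f^i(\vect{x}(s-\t(s)))\,ds$, valid for all $t$ by $\o$-periodicity. Replacing $f^i$ by its constant lower bound $m(r)$ and using $\il_0^\o b_i(s)\,ds = \bar{b}_i \o$ yields
$$(T_\l^i\vect{x})(t) \geq \l m(r)\, (\s_i^{-1}-1)^{-1}\bar{b}_i\o \geq \l m(r)\,\G,$$
where the final inequality is the definition $\G = \min_{i=1,\dots,n}\{(\s_i^{-1}-1)^{-1}\bar{b}_i\o\}$. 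Since this holds for every $t$ and $T_\l^i\vect{x}(t) \geq 0$, taking the supremum over $t$ and noting that $\norm{\vect{T}_\l\vect{x}} = \sum_{j=1}^n \sup_{t\in[0,\o]}\abs{T_\l^j\vect{x}(t)}$ dominates the single $i$-th summand, I conclude $\norm{\vect{T}_\l\vect{x}} \geq \l \G m(r)$.

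There is no substantial obstacle here; the entire content is the observation in the first paragraph that the cone constraint pins $\norm{\vect{x}(t)}$ inside the interval $[\s r, r]$ on which $m(r)$ is defined, so that $m(r)$ is a legitimate uniform lower bound for each $f^i(\vect{x}(s-\t(s)))$. Once that is in place the computation is identical in spirit to Lemma \ref{f_estimate_>*}, with $m(r)$ playing the role of the constant; the factor $\s$ present there drops out precisely because the bound $f^i \geq m(r)$ is already uniform rather than proportional to $\sum_j x_j(t)$, so no conversion from $\sum_j x_j$ to $\norm{\vect{x}}$ is needed.
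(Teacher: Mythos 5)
Your proof is correct and follows essentially the same route as the paper, which merely remarks that $m(r) \leq f^i(\vect{x}(t))$ for $\vect{x} \in \partial\O_r$ (exactly your cone-band observation that $\s r \leq \norm{\vect{x}(t)} \leq r$) and then invokes the computation of Lemma~\ref{f_estimate_>*} with the constant bound $m(r)$ in place of $\eta\sum_j x_j(t)$. Your closing remark correctly explains the only structural difference from Lemma~\ref{f_estimate_>*}, namely why no factor of $\s$ appears here.
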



\begin{lemma}\label{lm9} Assume \rm{(H1)-(H2)} hold. If $ \vect{x}\in
\partial \O_{r}$, $r >0$, then $ \norm{\vect{T}_{\l}\vect{x}}
\leq \l \chi M(r). $ \end{lemma}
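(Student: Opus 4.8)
The plan is to mirror the proof of Lemma~\ref{f_estimate_<*}, replacing the linear bound $f^i(\vect{x}(t)) \leq \varepsilon \sum_{j=1}^n x_j(t)$ used there by the uniform bound $f^i \leq M(r)$ furnished directly by the definition of $M(r)$.

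First I would fix $\vect{x} = (x_1,\dots,x_n) \in \partial \O_r$, so that $\norm{\vect{x}} = r$ and, since $\vect{x} \in K$, each component is $\o$-periodic and continuous. The key preliminary observation is that, for every fixed $s$, the vector $\vect{x}(s-\t(s)) = (x_1(s-\t(s)),\dots,x_n(s-\t(s)))$ is a point of $\mathbb{R}_+^n$ whose norm satisfies
$$
\norm{\vect{x}(s-\t(s))} = \sum_{j=1}^n \abs{x_j(s-\t(s))} \leq \sum_{j=1}^n \sup_{t\in[0,\o]} \abs{x_j(t)} = \norm{\vect{x}} = r .
$$
Hence $\norm{\vect{x}(s-\t(s))} \leq r$, and the definition of $M(r)$ gives $f^i(\vect{x}(s-\t(s))) \leq M(r)$ for all $s$ and all $i=1,\dots,n$.

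Next I would estimate each component of the operator. Using the upper bound $G_i(t,s) \leq \s_i^{-1}/(\s_i^{-1}-1)$ valid for $t \leq s \leq t+\o$, together with the periodicity of $b_i(s) f^i(\vect{x}(s-\t(s)))$ that lets one replace $\il_t^{t+\o}$ by $\il_0^{\o}$ exactly as in Lemma~\ref{lm-compact}, I obtain
$$
T_{\l}^i\vect{x}(t) \leq \f{\s_i^{-1}}{\s_i^{-1}-1} \l \il_0^{\o} b_i(s) f^i(\vect{x}(s-\t(s)))\,ds \leq \f{\s_i^{-1}}{\s_i^{-1}-1} \l M(r) \il_0^{\o} b_i(s)\,ds .
$$
Recalling $\il_0^{\o} b_i(s)\,ds = \bar{b}_i \o$ and summing the suprema over $t \in [0,\o]$ and over $i$ then yields
$$
\norm{\vect{T}_{\l}\vect{x}} \leq \l M(r) \sum_{i=1}^n \f{\s_i^{-1}}{\s_i^{-1}-1} \bar{b}_i \o = \l \chi M(r),
$$
which is the claim.

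There is no serious obstacle here; the argument is a routine upper estimate parallel to Lemma~\ref{f_estimate_<*}, with the companion lower bound for $m(r)$ in Lemma~\ref{lm8} handled symmetrically. The only point requiring genuine care is distinguishing the pointwise $\mathbb{R}_+^n$-norm $\norm{\vect{x}(s-\t(s))}$ from the ambient $X$-norm $\norm{\vect{x}}$ and verifying that the former never exceeds $r$, so that the definition of $M(r)$, whose maximum is taken over the ball $\norm{\vect{x}} \leq r$ in $\mathbb{R}_+^n$, legitimately applies at every value of $s$.
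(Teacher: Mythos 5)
Your proposal is correct and is precisely the argument the paper intends: the paper does not write the proof out, remarking only that $f^i(\vect{x}(t)) \leq M(r)$ on $\partial\O_r$ and that the lemma then follows ``in a similar manner'' to Lemma~\ref{f_estimate_<*}, which is exactly your estimate with $M(r)$ in place of $\varepsilon\norm{\vect{x}}$. Your extra care in distinguishing the pointwise $\mathbb{R}_+^n$-norm of $\vect{x}(s-\t(s))$ from the $X$-norm, so that the definition of $M(r)$ legitimately applies, is a detail the paper glosses over but is exactly the right justification.
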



\subsubsection{Proof of Theorem \ref{th2}}

Part (a). Choose a number $r_1 =1$. By Lemma \ref{lm8}, we
have
$$
\norm{\vect{T}_{\l}\vect{x}}  > \norm{\vect{x}}  , \;\;
\rm{for}\;\;\vect{x}\in  \partial \O_{r_1} \ and \  \l >
\f{1}{m(r_1)\Gamma}=\f{1}{m(1)\Gamma}.
$$
If $\vect{F}_0=0$, then $f^i_0=0$, $i=1,\dots,n$.  And we can choose $0 <r_2 < r_1$ so that  $f^i(\vect{x}) \leq \varepsilon  \norm{\vect{x}} $ for $\vect{x}\in \mathbb{R}_+^n$ and $\norm{\vect{x}} \leq r_2$,
$i=1,\dots,n,$ where the constant $\varepsilon> 0$ satisfies
$
\l \varepsilon \chi < 1.
$
Thus $f^i(\vect{x}(t)) \leq  \varepsilon \sum_{j=1}^n x_j(t)$, $i=1, \dots, n,$ for $\vect{x}=(x_1, \dots, x_n) \in \partial\O_{r_2}$ and $t \in [0, \o]$.
We have by Lemma ~\ref{f_estimate_<*} that
$$
 \norm{\vect{T}_{\l}\vect{x}}  \leq \l \varepsilon \chi \norm{\vect{x}}   < \norm{\vect{x}}   \quad \textrm{for} \quad  \vect{x}\in \partial\O_{r_2}.
$$
It follows from Lemma ~\ref{lm1} that
$$
i(\vect{T}_{\l}, \O_{r_1}, K)=0, \quad i(\vect{T}_{\l}, \O_{r_2}, K)=1.
$$
Thus $i(\vect{T}_{\l}, \O_{r_1} \setminus \bar{\O}_{r_2}, K)=-1$
and $\vect{T}_{\l}$ has a fixed point in  $\O_{r_1} \setminus
\bar{\O}_{r_2}$, which is a positive $\o$-periodic solution of
(\ref{eq1}) for $ \l > 1/(m(1)\Gamma)$.

If $\vect{F}_{\infty}=0$, then $f^i_{\infty}=0$, $i=1,\dots,n$.
And there is an $\hat{H}>0$ such that $f^i(\vect{x}) \leq
\varepsilon \norm{\vect{x}}$ for $\vect{x}\in \mathbb{R}_+^n$ and
$\norm{\vect{x}} \geq \hat{H}$, where the constant $\varepsilon >
0$ satisfies $ \l \varepsilon \chi < 1. $ Let $r_3=\max\{2r_1,
\f{\hat{H}}{\s}\}$ and it follows that $\sum_{i=1}^n x_i(t)
\geq \s \norm{\vect{x}}  \geq \hat{H}$ for $\vect{x}=(x_1, \dots,
x_n) \in \partial\O_{r_3}$ and $t \in [0, \o]$. Thus
$f^i(\vect{x}(t)) \leq  \varepsilon \sum_{i=1}^n x_i(t) $ for
$\vect{x}=(x_1,\dots,x_n) \in \partial\O_{r_3}$ and $t \in [0,
\o]$. In view of Lemma ~\ref{f_estimate_<*}, we have
$$ \norm{\vect{T}_{\l}\vect{x}}   \leq \l \varepsilon \chi \norm{\vect{x}}   <  \norm{\vect{x}}   \quad \textrm{for} \quad  \vect{x}\in \partial\O_{r_3}.$$
Again, it follows from Lemma ~\ref{lm1} that
$$
i(\vect{T}_{\l}, \O_{r_1}, K)=0, \quad \quad i(\vect{T}_{\l}, \O_{r_3}, K)=1.
$$
Thus $i(\vect{T}_{\l}, \O_{r_3} \setminus \bar{\O}_{r_1}, K)=1$,
and (\ref{eq1}) has a positive $\o$-periodic solution for $ \l >
1/(m(1)\Gamma)$.

If $\vect{F}_0=\vect{F}_{\infty}=0$, it is easy to see from the above proof that  $\vect{T}_{\l}$ has a fixed point $\vect{x}_1$ in  $\O_{r_1} \setminus \bar{\O}_{r_2}$
and a fixed point $\vect{x}_2$ in $\O_{r_3} \setminus \bar{\O}_{r_1}$ such that
$$
r_2 < \norm{\vect{x}_1} < r_1 < \norm{\vect{x}_2} < r_3.
$$
Consequently, (\ref{eq1}) has two positive $\o$-periodic solutions for $ \l > 1/(m(1)\Gamma)$ if $\vect{F}_0=\vect{F}_{\infty}=0$.

Part (b). Choose a number $r_1 =1$. By Lemma \ref{lm9}, one has
$$
\norm{\vect{T}_{\l}\vect{x}}  < \norm{\vect{x}}  , \;\;
\rm{for}\;\;\vect{x}\in  \partial \O_{r_1} \ \rm{and} \   0 < \l <
\f{1}{\chi M(r_1)}=\f{1}{M(1) \chi} .
$$
If $\vect{F}_0=\infty$, then there exists a component of $\vect{F}$
such that $f^i_0 = \infty$. Thus there is a positive number $r_2 <
r_1$ such that $f^i(\vect{x}) \geq \eta \norm{\vect{x}}$ for
$\vect{x}\in \mathbb{R}_+^n$ and $\norm{\vect{x}} \leq r_2$,
where $\eta > 0$ is chosen so that $\l  \G \eta \s > 1.$ Then
$$
f^i(\vect{x}(t)) \geq \eta \sum_{j=1}^n x_j(t) \;\; {\rm for} \;\; \vect{x}=(x_1, \dots, x_n) \in \partial \O_{r_2}, \;\;t \in [0,\o].
$$
Lemma ~\ref{f_estimate_>*}
implies that
$$
\norm{\vect{T}_{\l}\vect{x}}  \geq \l \G \eta \norm{\vect{x}} \s  >\norm{\vect{x}}   \quad \textrm{for} \quad  \vect{x}\in \partial\O_{r_2}.
$$
It follows from Lemma ~\ref{lm1} that
$$
i(\vect{T}_{\l}, \O_{r_1}, K)=1, \quad i(\vect{T}_{\l}, \O_{r_2}, K)=0.
$$
Thus $i(\vect{T}_{\l}, \O_{r_1} \setminus \bar{\O}_{r_2}, K)=1$
and $\vect{T}_{\l}$ has a fixed point in  $\O_{r_1} \setminus
\bar{\O}_{r_2}$  for $0< \l < 1/(M(1) \chi) $, which is a
positive $\o$-periodic solution of (\ref{eq1}).

If $\vect{F}_{\infty}=\infty$, there is a component of $\vect{F}$
such that $f^i_{\infty} = \infty$. Therefore there is an $\hat{H}
> 0$ such that $f^i(\vect{x}) \geq  \eta \norm{\vect{x}}$ for
$\vect{x}\in \mathbb{R}_+^n$ and $\vect{x} \geq \hat{H}$ , where
$\eta > 0$ is chosen so that $\l  \G \eta \s > 1.$ Let $r_3 =
\max\{2r_1, \hat{H}/\s\}$. If $ \vect{x}=(x_1, \dots,
x_n) \in \partial \O_{r_3}$, then
$$\min\limits_{0 \leq t \leq  \o} \sum_{j=1}^n x_j(t) \geq \s
\norm{\vect{x}}   \geq \hat{H},$$
and hence,
$$
f^i(\vect{x}(t)) \geq   \eta \sum_{j=1}^nx_j(t) \;\; {\rm for}\;\; t \in [0, \o].
$$
Again, it follows from Lemma  \ref{f_estimate_>*} that
$$
\norm{\vect{T}_{\l}\vect{x}}  \geq \l  \G \eta \s \norm{\vect{x}}   > \norm{\vect{x}}   \quad \textrm{for} \quad  \vect{x}\in \partial\O_{r_3}.
$$
It follows from Lemma ~\ref{lm1} that
$$
i(\vect{T}_{\l}, \O_{r_1}, K)=1, \quad i(\vect{T}_{\l}, \O_{r_3}, K)=0,
$$
and hence, $i(\vect{T}_{\l}, \O_{r_3} \setminus \bar{\O}_{r_1},
K)=-1$. Thus, $\vect{T}_{\l}$ has a fixed point in $\O_{r_3}
\setminus \bar{\O}_{r_1}$  for $0< \l < 1/{(M(1) \chi)} $,
which is a positive $\o$-periodic solution of (\ref{eq1}).

If $\vect{F}_0=\vect{F}_{\infty}=\infty$, it is easy to see from the above proof that  $\vect{T}_{\l}$ has a fixed point $\vect{x}_1$ in  $\O_{r_1} \setminus \bar{\O}_{r_2}$
and a fixed point $\vect{x}_2$ in $\O_{r_3} \setminus \bar{\O}_{r_1}$ such that
$$
r_2 < \norm{\vect{x}_1} < r_1 < \norm{\vect{x}_2} < r_3.
$$
Consequently, (\ref{eq1}) has two positive $\o$-periodic solutions
for $0< \l < 1/{(M(1) \chi)} $ if
$\vect{F}_0=\vect{F}_{\infty}=\infty$.

Part (c). If $i_0=0$, then $\vect{F}_0>0$ and $\vect{F}_{\infty}>0$. Therefore there exist two components $f^i$ and $f^j$ of $\vect{F}$ and positive numbers
$\eta_1$, $ \eta_2$, $r_1$ and $r_2$ such that $r_1 < r_2$ and
\begin{eqnarray*}
  f^i(\vect{x})& \geq & \eta_1 \norm{\vect{x}} \quad \textrm{for} \quad \vect{x}\in \mathbb{R}_+^n,\; \norm{\vect{x}} \leq r_1,  \\[.2cm]
  f^j(\vect{x})& \geq &  \eta_2 \norm{\vect{x}} \quad \textrm{for} \quad \vect{x}\in \mathbb{R}_+^n,\; \norm{\vect{x}} \geq r_2.
\end{eqnarray*}
Let
$$c_1 = \min\ld\eta_1, \eta_2, \min\ld\f{f^j(\vect{x})}{\var(\norm{\vect{x}})}: \vect{x}\in \mathbb{R}_+^n, \;\; r_1 \s \leq \norm{\vect{x}} \leq  r_2\rd\rd > 0.$$
Then
\begin{align} \label{ineq1}
  f^i(\vect{x}) \geq c_1 \norm{\vect{x}}\; \textrm{ for } \;\vect{x}\in \mathbb{R}_+^n,\; \norm{\vect{x}} \leq r_1,
\end{align}
and
\begin{align} \label{ineq2}
  f^j(\vect{x})   \geq   c_1 \norm{\vect{x}} \; \textrm{ for } \;\vect{x}\in \mathbb{R}_+^n,\; \norm{\vect{x}} \geq r_1 \s.
\end{align}

Assume $\vect{v}(t)=(v_1,\dots,v_n)$ is a positive $\o$-periodic solution of (\ref{eq1}). We will show that this leads to a contradiction
for $\l > \l_0$, where $\l_0={1}/{(\s \G c_1)}  $. In fact, if $\norm{\vect{v}} \leq r_1$, (\ref{ineq1}) implies that
$$
f^i(\vect{v}(t)) \geq c_1\sum_{j=1}^n v_j(t), \;\; {\rm for }\;\; t \in [0,\o].
$$
On the other hand, if $\norm{\vect{v}} > r_1$, then $ \min_{ 0 \leq t \leq \o} \sum_{i=1}^n v_i(t) \geq
\s \norm{\vect{v}} > r_1 \s,$
which, together with (\ref{ineq2}), implies that
$$
f^j(\vect{v}(t)) \geq c_1 \sum_{m=1}^n v_m(t), \;\; {\rm for }\;\; t \in [0, \o].
$$
Since  $\vect{T}_{\l}\vect{v}(t) =\vect{v}(t)$ for $ t \in [0,\o]$, it
follows from Lemma ~\ref{f_estimate_>*} that, for $\l > \l_0,$
$$
  \norm{\vect{v}}  =  \norm{\vect{T}_{\l}\vect{v} }\geq \l  \s \G c_1 \norm{\vect{v}} > \norm{\vect{v}},
$$
which is a contradiction.

If $i_{\infty}=0$, then $\vect{F}_0  < \infty$ and $\vect{F}_{\infty} < \infty$. Thus $f^i_{0}<\infty$ and $f^i_{\infty}<\infty$, $i=1,...,n.$
Therefore, for each $i=1,...,n,$ there exist positive numbers
$\e_1^i$, $ \e_2^i$, $r_1^i$ and $r_2^i$ such that $r_1^i < r_2^i$,
$$
  f^i(\vect{x})  \leq   \e_1^i \norm{\vect{x}}\; \rm{for} \;\vect{x}\in \mathbb{R}_+^n,\; \norm{\vect{x}} \leq r_1^i,
$$
and
$$
  f^i(\vect{x})   \leq  \e_2^i \norm{\vect{x}}\; \rm{for} \;\vect{x}\in \mathbb{R}_+^n,\; \norm{\vect{x}} \geq r_2^i.
$$
Let
$$
\e^i = \max\{\e_1^i, \e_2^i, \max\{\f{f^i(\vect{x})}{\norm{\vect{x}}}: \vect{x}\in \mathbb{R}_+^n, \;\; r_1^i \leq \norm{\vect{x}} \leq  r_2^i \}\} > 0
$$
and $ c_2 = \max\limits_{i=1,...,n} \{\e^i\} > 0.$ Thus, we have
$$
 f^i(\vect{x}) \leq  c_2 \norm{\vect{x}}\; \rm{ for } \;\vect{x}\in \mathbb{R}_+^n,\;\;i=1,...,n.
$$
Assume $\vect{v}(t)$ is a positive $\o$-periodic solution of (\ref{eq1}). We will show that this leads to a contradiction
for $0<  \l < \l_0$, where $\l_0 = {1}/{(c_2 \chi)}.$
Since  $\vect{T}_{\l}\vect{v}(t) = \vect{v}(t)$ for $ t \in [0,\o]$, it follows from Lemma \ref{f_estimate_<*} that , for $0< \l <\l_0,$
\begin{equation*}
\begin{split}
  \norm{\vect{v}}
   =   \vect{\vect{T}_{\l}\vect{v}}\leq \l \chi c_2 \norm{\vect{v}} <  \norm{\vect{v}},
\end{split}
\end{equation*}
which is a contradiction. The proof is complete.

Part (d).  If $i_0=i_{\infty}=1$, then $F_0=0, F_{\infty}=\infty$ or $F_0=\infty, F_{\infty}=0$.  For $F_0=0, F_{\infty}=\infty$, then $f^i_0=0$, $i=1,\dots,n$.  And we can choose $0 < r_1$ so that  $f^i(\vect{x}) \leq \varepsilon  \norm{\vect{x}} $ for $\vect{x}\in \mathbb{R}_+^n$ and $\norm{\vect{x}} \leq r_1$,
$i=1,\dots,n,$ where the constant $\varepsilon> 0$ satisfies
$
\l \varepsilon \chi < 1.
$
Thus $f^i(\vect{x}(t)) \leq  \varepsilon \sum_{j=1}^n x_j(t)$, $i=1, \dots, n,$ for $\vect{x}=(x_1, \dots, x_n) \in \partial\O_{r_1}$ and $t \in [0, \o]$.
We have by Lemma ~\ref{f_estimate_<*} that
$$
 \norm{\vect{T}_{\l}\vect{x}}  \leq \l \varepsilon \chi \norm{\vect{x}}   < \norm{\vect{x}}   \quad \textrm{for} \quad  \vect{x}\in \partial\O_{r_1}.
$$
On the other hand, If $\vect{F}_{\infty}=\infty$, there is a component of $\vect{F}$
such that $f^i_{\infty} = \infty$. Therefore there is an $\hat{H}
> 0$ such that $f^i(\vect{x}) \geq  \eta \norm{\vect{x}}$ for
$\vect{x}\in \mathbb{R}_+^n$ and $\vect{x} \geq \hat{H}$ , where
$\eta > 0$ is chosen so that $\l  \G \eta \s > 1.$ Let $r_2 =
\max\{2r_1, \hat{H}/\s\}$. If $ \vect{x}=(x_1, \dots,
x_n) \in \partial \O_{r_2}$, then
$$\min\limits_{0 \leq t \leq  \o} \sum_{j=1}^n x_j(t) \geq \s
\norm{\vect{x}}   \geq \hat{H},$$
and hence,
$$
f^i(\vect{x}(t)) \geq   \eta \sum_{j=1}^nx_j(t) \;\; {\rm for}\;\; t \in [0, \o].
$$
Again, it follows from Lemma  \ref{f_estimate_>*} that
$$
\norm{\vect{T}_{\l}\vect{x}}  \geq \l  \G \eta \s \norm{\vect{x}}   > \norm{\vect{x}}   \quad \textrm{for} \quad  \vect{x}\in \partial\O_{r_2}.
$$
It follows from Lemma ~\ref{lm1} that
$$
i(\vect{T}_{\l}, \O_{r_1}, K)=1, \quad i(\vect{T}_{\l}, \O_{r_2}, K)=0,
$$
and hence, $i(\vect{T}_{\l}, \O_{r_2} \setminus \bar{\O}_{r_1},
K)=-1$. Thus, $\vect{T}_{\l}$ has a fixed point in $\O_{r_2}
\setminus \bar{\O}_{r_1}$, which is a positive $\o$-periodic solution of (\ref{eq1}).

Now let's look at the case $F_0=\infty, F_{\infty}=0$. There exists a component of $\vect{F}$
such that $f^i_0 = \infty$. Thus there is a positive number $r_1$ such that $f^i(\vect{x}) \geq \eta \norm{\vect{x}}$ for
$\vect{x}\in \mathbb{R}_+^n$ and $\norm{\vect{x}} \leq r_1$,
where $\eta > 0$ is chosen so that $\l  \G \eta \s > 1.$ Then
$$
f^i(\vect{x}(t)) \geq \eta \sum_{j=1}^n x_j(t) \;\; {\rm for} \;\; \vect{x}=(x_1, \dots, x_n) \in \partial \O_{r_1}, \;\;t \in [0,\o].
$$
Lemma ~\ref{f_estimate_>*}
implies that
$$
\norm{\vect{T}_{\l}\vect{x}}  \geq \l \G \eta \norm{\vect{x}} \s  >\norm{\vect{x}}   \quad \textrm{for} \quad  \vect{x}\in \partial\O_{r_1}.
$$

On the other hand,  because $\vect{F}_{\infty}=0$, then $f^i_{\infty}=0$, $i=1,\dots,n$.
And there is an $\hat{H}>0$ such that $f^i(\vect{x}) \leq
\varepsilon \norm{\vect{x}}$ for $\vect{x}\in \mathbb{R}_+^n$ and
$\norm{\vect{x}} \geq \hat{H}$, where the constant $\varepsilon >
0$ satisfies $ \l \varepsilon \chi < 1. $ Let $r_2=\max\{2r_1,
\f{\hat{H}}{\s}\}$ and it follows that $\sum_{i=1}^n x_i(t)
\geq \s \norm{\vect{x}}  \geq \hat{H}$ for $\vect{x}=(x_1, \dots,
x_n) \in \partial\O_{r_2}$ and $t \in [0, \o]$. Thus
$f^i(\vect{x}(t)) \leq  \varepsilon \sum_{i=1}^n x_i(t) $ for
$\vect{x}=(x_1,\dots,x_n) \in \partial\O_{r_2}$ and $t \in [0,
\o]$. In view of Lemma ~\ref{f_estimate_<*}, we have
$$ \norm{\vect{T}_{\l}\vect{x}}   \leq \l \varepsilon \chi \norm{\vect{x}}   <  \norm{\vect{x}}   \quad \textrm{for} \quad  \vect{x}\in \partial\O_{r_2}.$$
Again, it follows from Lemma ~\ref{lm1} that
$$
i(\vect{T}_{\l}, \O_{r_1}, K)=0, \quad \quad i(\vect{T}_{\l}, \O_{r_2}, K)=1.
$$
Thus $i(\vect{T}_{\l}, \O_{r_2} \setminus \bar{\O}_{r_1}, K)=1$,
and (\ref{eq1}) has a positive $\o$-periodic solution.

\subsubsection{Proof of Theorem \ref{th4}}
If $\vect{F}_{\infty}>\vect{F}_0$, then ${1}/{(\s \G \vect{F}_{\infty})} < \l < {1}/{(\chi\vect{F}_0)}$.
It is easy to see that there
exists an $0<\varepsilon <\vect{F}_{\infty}$ such that
$$
\f{1}{\s \G (\vect{F}_{\infty}- \varepsilon)} < \l <\f{1}{\chi (\vect{F}_0 +\varepsilon)}.
$$
Now, according to the definition of $F_0$, there is an $r_1>0$ such that $f^i(\vect{x}) \leq (\vect{F}_0 + \varepsilon) \norm{\vect{x}}$ for
$ \vect{x} \in \mathbb{R}_+^n$ and $\norm{\vect{x}} \leq r_1$, $i=1, \dots, n$.
Thus $f^i(\vect{x}(t)) \leq  (\vect{F}_0 + \varepsilon) \sum_{j=1}^n x_j(t)$, $i=1, \dots, n$,  for $\vect{x}=(x_1,\dots, x_n) \in \partial\O_{r_1}$ and $t \in [0, \o]$.
We have by Lemma ~\ref{f_estimate_<*} that
$$
 \norm{\vect{T}_{\l}\vect{x}}  \leq \l(\vect{F}_0 + \varepsilon) \chi \norm{\vect{x}}   < \norm{\vect{x}}   \quad \textrm{for} \quad  \vect{x} \in \partial\O_{r_1}.
$$
On the other hand, there is a fixed i such that $f_{\infty} ^i= \vect{F}_{\infty}>0$.
Thus there is an $\hat{H}> r_1 $ such that $f^i(\vect{x}) \geq (\vect{F}_{\infty} - \varepsilon) \norm{\vect{x}}$ for $ \vect{x} \in \mathbb{R}_+^n$ and $\norm{\vect{x}} \geq \hat{H}$.
Let $r_2=\max\{2r_1, \f{\hat{H}}{\s}\}$ and it follows that $\sum_{j=1}^nx_j(t) \geq \s \norm{\vect{x}}   \geq \hat{H}$
for $\vect{x}=(x_1,\dots,x_n) \in \partial\O_{r_2}$ and $t \in [0, \o]$.
Thus $f^i(\vect{x}(t)) \geq  (\vect{F}_{\infty} - \varepsilon)\sum_{j=1}^n x_j(t)$ for $\vect{x} \in \partial\O_{r_2}$ and $t \in [0, \o]$.
In view of Lemma ~\ref{f_estimate_>*}, we have
$$ \norm{\vect{T}_{\l}\vect{x}}   \geq  \l (\vect{F}_{\infty} - \varepsilon) \s \G \norm{\vect{x}}   >  \norm{\vect{x}}   \quad \textrm{for} \quad  \vect{x} \in \partial\O_{r_2}.$$
It follows from Lemma ~\ref{lm1} that
$$
i(\vect{T}_{\l}, \O_{r_1}, K)=1, \quad i(\vect{T}_{\l}, \O_{r_2}, K)=0.
$$
Thus
$i(\vect{T}_{\l}, \O_{r_2} \setminus \bar{\O}_{r_1}, K)=-1$.
Hence, $\vect{T}_{\l}$ has a fixed point in  $\O_{r_2} \setminus \bar{\O}_{r_1}$. Consequently, (\ref{eq1})
has a positive $\o$-periodic solution.

If $\vect{F}_{\infty}<\vect{F}_0$, then ${1}/{(\s\G\vect{F}_{0})} < \l < {1}/{(\chi \vect{F}_{\infty})}$.
It is easy to see that there
exists an $0<\varepsilon <\vect{F}_{0}$ such that
$$
\f{1}{ \s \G (\vect{F}_{0}- \varepsilon)} < \l < \f{1}{\chi (\vect{F}_{\infty}+\varepsilon) }.
$$
Now, turning to $\vect{F}_0$ and $\vect{F}_{\infty}$. Again,  there are a fixed index $i$ and  an $r_1>0$ such that $f^i_{0}=\vect{F}_{0}$ and
$f^i(\vect{x}) \geq (\vect{F}_0 - \varepsilon) \norm{\vect{x}}$ for $ \vect{x} \in \mathbb{R}_+^n$ and $\norm{\vect{x}} \leq r_1$.
Thus $f^i(\vect{x}(t)) \geq  (\vect{F}_0 - \varepsilon)\sum_{j=1}^n x_j(t)$ for $\vect{x}=(x_1,\dots,x_n) \in \partial\O_{r_1}$ and $t \in [0, \o]$.
We have by Lemma ~\ref{f_estimate_>*} that
$$
 \norm{\vect{T}_{\l}\vect{x}}  \geq \l(\vect{F}_0 - \varepsilon) \s \G \norm{\vect{x}}   > \norm{\vect{x}}   \quad \textrm{for} \quad  \vect{x} \in \partial\O_{r_1}.
$$
On the other hand, there is an $\hat{H}>r_1$ such that $f^i(\vect{x}) \leq (\vect{F}_{\infty} + \varepsilon) \norm{\vect{x}}$, $i=1,\dots,n,$ for $ \vect{x} \in \mathbb{R}_+^n$ and $\norm{\vect{x}} \geq \hat{H}$.
Let $r_2=\max\{2r_1, {\hat{H}}/{\s}\}$ and it follows that $\sum_{j=1}^n x_j(t) \geq \s \norm{\vect{x}}  \geq \hat{H}$
for $\vect{x}=(x_1,\dots,x_n) \in \partial\O_{r_2}$ and $t \in [0, \o]$.
Thus $f^i(\vect{x}(t)) \leq  (\vect{F}_{\infty} + \varepsilon)\sum_{j=1}^n x_j(t)$,  $i=1,\dots,n,$ for $\vect{x}=(x_1,\dots,x_n) \in \partial\O_{r_2}$ and $t \in [0, \o]$.
In view of Lemma ~\ref{f_estimate_<*}, we have
$$ \norm{\vect{T}_{\l}\vect{x}}   \leq  \l (\vect{F}_{\infty} + \varepsilon)  \chi \norm{\vect{x}}   <  \norm{\vect{x}}   \quad \textrm{for} \quad  \vect{x} \in \partial\O_{r_2}.$$
It follows from Lemma ~\ref{lm1} that
$$
i(\vect{T}_{\l}, \O_{r_1}, K)=0, \quad i(\vect{T}_{\l}, \O_{r_2}, K)=1.
$$
Thus
$i(\vect{T}_{\l}, \O_{r_2} \setminus \bar{\O}_{r_1}, K)=1$.
Hence, $\vect{T}_{\l}$ has a fixed point in  $\O_{r_2} \setminus \bar{\O}_{r_1}$. Consequently, (\ref{eq1})
has a positive $\o$-periodic solution. The proof is complete.

\subsection{Proof of asymptotic stability}\label{proof stability}
In the following, we present the details of the proof of the sufficient and necessary criteria for the the asymptotic stability of \eqref{SYSTEM}. The approach involves the contraction mapping principle and contradiction arguments.

\subsubsection{Proof of Theorem \ref{STH} (sufficient criteria)}

For any $t_0\geq0$, one can find a $\delta$ with $0\leq\delta < L$ such that
   $$\delta K+\alpha L\leq L,$$
where
$$K:=\max \{K_1,K_2,\dots,K_n\}, ~~K_i:=\sup_{t\geq t_0}\ld\exp\ld-\il^t_{t_0} a_i(s)ds\rd\rd.
$$
For any $\phi\in \mathscr{C}$, by ($H_2$), there is a unique solution of  (\ref{SYSTEM})
 $$\vect{x}(t)=\vect{x}(t,t_0,\phi), ~~\vect{x}_{t_0}=\phi,~~t\geq t_0.$$

We first show that the zero solution of \eqref{SYSTEM} is attractive, i.e.,
$$\lim_{t\rightarrow +\infty}\vect{x}(t,t_0,\phi)=0, ~~\phi\in \mathscr{C}_\delta :=\{\vect{x}\in \mathscr{C}:\norm{\vect{x}}\leq\delta \}.$$

Consider the initial value problem
\begin{equation}\label{eq7}
\vect{x}'(t)=-\vect{A}(t)\vect{x}(t) + \vect{B}(t)\vect{F}(\vect{x}_t), \ \ \vect{x}_{t_0}=\phi.
\end{equation}
Then the unique solution of (\ref{eq7}) satisfies
$$\vect{x}(t)=\vect{\Phi}(t,t_0)\phi(0)+ \il^t_{t_0}\vect{\Phi}(t,s)\vect{B}(s)\vect{F}(\vect{x}_s)ds,$$
where
$$\vect{\Phi}(t,s)=diag\left[\exp\ld-\il^t_s a_1(u)du\rd,\dots,\exp\ld-\il^t_s a_n(u)du\rd\right].
$$
Define
$$S=\{\vect{x}\in C([t_0-r,+\infty), \mathbb{R}^n) \mid \vect{x}_{t_0}=\phi, \norm{\vect{x}_t}\leq L,  t\geq t_0, \lim_{t\rightarrow+\infty}\vect{x}(t)= 0 \}.$$
Then $S$ is a complete metric space with the metric
$$
\rho(\vect{x},\vect{y})=\sup_{t\geq t_0-r}\abs{\vect{x}(t)-\vect{y}(t)}.$$
Define the mapping $\vect{P}:S\rightarrow C([t_0-r,+\infty), \mathbb{R}^n)$ by
\begin{equation*}
\begin{split}
(\vect{P}\vect{x})(t)
&= \vect{\phi}(t-t_0), \ \ \ \ \ \ \ \ \ \ \ \ \ \ \ \ \ \ \ \ \  \ \ \ \ \ \ \ \ \ \ \  \  t_0-r\leq t\leq t_0,\\
(\vect{P}\vect{x})(t)
&= \vect{\Phi}(t,t_0)\phi(0)+ \il^t_{t_0}\vect{\Phi}(t,s)\vect{B}(s)\vect{F}(\vect{x}_s)ds, \ \ \ \ \ \ \  t\geq t_0.\\
\end{split}
\end{equation*}

We claim that $\vect{P}$ maps $S$ into itself, i.e. $\vect{P}:S\rightarrow S$.
In fact, it is clear that $\vect{P}\vect{x}$ is continuous for $\vect{x} \in S$ and $(\vect{P}\vect{x})(t)= \vect{\phi}(t-t_0)$ for $t_0-r\leq t\leq t_0$.
For any $t\geq t_0$,
\begin{equation*}
\begin{split}
\abs{(\vect{Px})(t)}
&\leq \abs{\vect{\Phi}(t,t_0)\phi(0)}+ \il^t_{t_0} \abs{\vect{\Phi}(t,s)\vect{B}(s)} \cdot \abs{\vect{F}(\vect{x}_s)}ds\\
&\leq \abs{\vect{\Phi}(t,t_0)\phi(0)}+\il^{t}_{t_0} K_L \abs{\vect{\Phi}(t,s)\vect{B}(s)}\cdot\norm{\vect{x}_s}ds\\
&\leq \abs{\vect{\Phi}(t,t_0)\phi(0)}+\il^{t}_{t_0} K_L \abs{\vect{\Phi}(t,s)\vect{B}(s)}\cdot L ds\\
&\leq\delta K+\alpha L\leq L. \\
\end{split}
\end{equation*}
Then, $\norm{(\vect{P}\vect{x})_t}=\sup_{-r \leq \theta\leq 0} \abs{(\vect{P}\vect{x}(t+\theta)} \leq L.$

We now show that $\vect{P}\vect{x}(t)\rightarrow 0$ when $t\rightarrow +\infty.$ By (H3), we have
 $$\lim_{t\rightarrow +\infty}\exp\ld-\il^t_{t_0}a_i(s)ds\rd=0, ~i=1,2,\dots,n.$$
Thus the first term on the right-hand side of $(\vect{Px})(t)$ tends to zero. In the following, we show that the second term on the right side of $(\vect{Px})(t)$ also tends to zero. The fact $\vect{x}\in S$ implies that $\abs{\vect{x}(t)}\leq L$ for $t\geq t_0$. For any given $\varepsilon>0$, there exists a $t_1>0$ such that $\norm{\vect{x}_t}<\varepsilon$ for all $t\geq t_1$.  By (H3),  there exists a $t_2$ with $t_2>t_1$ such that
 $$\exp\ld-\il_{t_1}^ta_i(u)du\rd<\f{\varepsilon}{\alpha L},~~t>t_2.$$
For any $t>t_2$, one has
\begin{equation*}
\begin{split}
\abs{ \il^t_{t_0}\vect{\Phi}(t,s)\vect{B}(s)\vect{F}(\vect{x}_s)ds}
&\leq \il^{t_1}_{t_0} \abs{\vect{\Phi}(t,s)\vect{B}(s)}\cdot \abs{\vect{F}( \vect{x}_s)}ds+ \il^t_{t_1}\abs{ \vect{\Phi}(t,s)\vect{B}(s)}\cdot \abs{\vect{F}(\vect{x}_s)}ds\\
&\leq  K_L\il^{t_1}_{t_0}\abs{\vect{\Phi}(t,s)\vect{B}(s)}\cdot \norm{\vect{x}_s}ds+ K_L \il^t_{t_1}\abs{\vect{\Phi}(t,s)\vect{B}(s)}\cdot\norm{\vect{x}_s}ds\\
&\leq  K_L \il^{t_1}_{t_0}\abs{\vect{\Phi}(t,s)\vect{B}(s)}ds\cdot L+ K_L \il^t_{t_1}\abs{\vect{\Phi}(t,s)\vect{B}(s)}ds\cdot \varepsilon\\
&\leq \abs{\vect{\Phi}(t,t_1)}\cdot K_L \il^{t_1}_{t_0}\abs{\vect{\Phi}(t_1,s)\vect{B}(s)}ds\cdot L+\alpha\varepsilon\\
&\leq \abs{\vect{\Phi}(t,t_1)}\cdot \alpha \cdot L+\alpha\varepsilon\\
&\leq \varepsilon+\alpha\varepsilon=(1+\alpha)\varepsilon
\end{split}
\end{equation*}
Then $\lim_{t\rightarrow\infty}(\vect{P}\vect{x})(t)=0,$
and hence $(\vect{P}\vect{x}) \in S$.

We next claim that the map $\vect{P}$ is a contraction. For any $\vect{x}, \vect{y} \in S$,
$$\vect{Px}(t)=\vect{Py}(t),~~t_0-r\leq t\leq t_0.$$
For any $t\geq t_0$, one has
\begin{equation*}
\begin{split}
\abs{\vect{Px}(t)-\vect{Py}(t)}
&=\abs{\il^t_{t_0}\vect{\Phi}(t,s)\vect{B}(s)\vect{F}(\vect{x}_s)ds-
\il^t_{t_0}\vect{\Phi}(t,s)\vect{B}(s)\vect{F}( \vect{y}_s))ds}\\
&\leq \il^t_{t_0}\abs{ \vect{\Phi}(t,s)\vect{B}(s)}\cdot \abs{\vect{F}(\vect{x}_s)-
\vect{F}(\vect{y}_s)}ds\\
&\leq K_L\il^t_{t_0}\abs{ \vect{\Phi}(t,s)\vect{B}(s)}\cdot \norm{\vect{x}_s-\vect{y}_s}ds\\
&\leq K_L\il^t_{t_0}\abs{ \vect{\Phi}(t,s)\vect{B}(s)}\cdot \rho(\vect{x},\vect{y})ds\\
&\leq \alpha \rho(\vect{x},\vect{y}).\\
\end{split}
\end{equation*}
Then $\rho(\vect{P}\vect{x},\vect{P}\vect{y})=\sup\limits_{t\geq t_0} \abs{\vect{Px}(t)-\vect{Py}(t)}
\leq \alpha \rho(\vect{x},\vect{y}).$

Now, by the contraction mapping principle, $\vect{P}$ has a unique fixed point in $S$, which is the unique solution of (\ref{eq7}) and tends to zero as $t$ tends to
infinity. Obviously, the unique solution of (\ref{eq7}) is $\vect{x}(t)$. Therefore,
$$\lim_{t\rightarrow +\infty}\vect{x}(t,t_0,\phi)=0, \ \  \forall\phi\in \mathscr{C}_\delta.$$

To obtain the asymptotic stability, we need to show the zero solution
of (\ref{eq7}) is stable. For any given $0<\varepsilon< L$, we choose $0<\delta<\varepsilon$ such that $\delta K+\alpha \varepsilon<\varepsilon$. The solution of (\ref{eq7}) is $\vect{x}(t)=\vect{x}(t,t_0,\phi)$, $\vect{x}_{t_0}=\phi, \norm{\phi}\leq \delta$.  For $t\geq t_0$,
$$\vect{x}(t)=\vect{\Phi}(t,t_0)\phi(0)+ \il^t_{t_0}\vect{\Phi}(t,s)\vect{B}(s)\vect{F}(\vect{x}_s)ds.$$
We show that $\abs{\vect{x}(t)}< \varepsilon$ when $t\geq t_0$. In fact, $\abs{\vect{x}(t_0)}< \varepsilon,$ if there exists $t^\ast > t_0$ such that $\abs{\vect{x}(t^\ast)}=\varepsilon$ and $\abs{\vect{x}(s)}< \varepsilon$ when $t_0\leq s < t^\ast$, we have,
\begin{equation*}
\begin{split}
\abs{\vect{x}(t^\ast)}
&\leq \abs{\vect{\Phi}(t^\ast,t_0)\phi(0)}+ \il^{t^\ast}_{t_0}\abs{ \vect{\Phi}(t,s)\vect{B}(s)}\cdot \abs{\vect{F}(\vect{x}_s)}ds\\
&\leq \abs{\vect{\Phi}(t,t_0)\phi(0)}+\il^{t}_{t_0} K_L \abs{\vect{\Phi}(t,s)\vect{B}(s)}\cdot\norm{\vect{x_s}}ds\\
&\leq  K\delta+ \alpha \varepsilon < \varepsilon,\\
\end{split}
\end{equation*}
which is a contradiction.
Therefore, $\abs{\vect{x}(t)}< \varepsilon$ for all $t\geq t_0$.
The proof is complete.

\subsubsection{Proof of Theorem \ref{STH1} (Necessary criteria)}
The proof is based on the contradiction argument. Suppose that (H3) fails. By (H6), there exists a sequence$\{t_n\}$, $t_n\rightarrow +\infty$ when $n\rightarrow +\infty$ such that
  $$\lim_{n\rightarrow +\infty} \il_0^{t_n}a_i(s)ds=q$$
for some $q\in \mathbb{R}$. Then, one can choose a positive constant $Q>0$ such that
 $$-Q\leq \il_0^{t_n}a_i(s)ds\leq Q, \ \ \ n=1,2,\dots .$$
By (H5),
$$ K_L \il_0^{t_n}\exp\ld-\il_s^{t_n}a_i(u)du\rd\abs{b_i(s)}ds\leq\alpha,$$
then
\begin{equation*}
\begin{array}{l}
K_L \il_0^{t_n}\exp\ld\il_0^sa_i(u)du\rd\abs{b_i(s)}ds\\
=K_L \il_0^{t_n} \exp\ld\il_0^{t_n}a_i(u)du\rd\cdot\exp\ld-\il_s^{t_n}a_i(u)du\rd\abs{b_i(s)}ds\\
 \leq\alpha \exp\ld\il_0^{t_n}a_i(u)du\rd\leq e^Q.\\
\end{array}
\end{equation*}
Whence, the sequence $\ld K_L \il_0^{t_n}\exp\{\il_0^sa_i(u)du\}\abs{b_i(s)}ds\rd$ is bounded and there exists
a convergent subsequence. For brevity in notation, we still assume the
sequence $K_L \il_0^{t_n}\exp\ld\il_0^sa_i(u)du\rd\abs{b_i(s)}ds$ is convergent, say its limit is $\gamma \in \mathbb{R}_+$. We can choose a positive integer $m$ so large that
 $$ K_L \il_{t_m}^{t_n}\exp\ld\il_0^sa_i(u)du\rd\abs{b_i(s)}ds\leq \f{1-\alpha}{2K^2 e^{2Q}},~~n\geq m.$$
Consider the solution $\vect{x}(t)=\vect{x}(t,t_m,\phi)$ with $\phi=(\phi_1, \cdots, \phi_n)^T$ and $\phi_i(s-t_m)\equiv \delta$ for $s \in [t_m-r, t_m]$ and $\abs{\vect{x}(t)}\leq L$ for all $t\geq t_m$. For $t\geq t_m$, one has
\begin{equation*}
\begin{split}
\abs{\vect{x}(t)}
&\leq \abs{\vect{\Phi}(t,t_0)\phi(0)}+ \il^t_{t_m} \abs{\vect{\Phi}(t,s)\vect{B}(s)}\cdot \abs{\vect{F}(\vect{x}_s)}ds\\
&\leq K\delta+\il^{t}_{t_m} K_L\abs{\vect{\Phi}(t,s)\vect{B}(s)}\cdot \norm{\vect{x_s}}ds\\
&\leq K\delta+\alpha \norm{\vect{x_t}}.
\end{split}
\end{equation*}
Then
$$\norm{\vect{x}_t}\leq \f{K\delta}{1-\alpha},~~t\geq t_m.$$
On the other hand, for $n\geq m$ large enough, we also have
$$\vect{x}(t_n)=\vect{x}(t_m)\vect{\Phi}(t_n,t_m)+ \il^{t_n}_{t_m}\vect{\Phi}(t_n,s)\vect{B}(s)\vect{F}(\vect{x}_s)ds,$$
then
 \begin{equation*}
\begin{split}
\abs{\vect{x}(t_n)}
&\geq \delta \abs{\vect{\Phi}(t_n,t_m)}- \il^{t_n}_{t_m}\abs{\vect{\Phi}(t_n,s)\vect{B}(s)}\cdot \abs{\vect{F}(\vect{x}_s)}ds\\
&\geq \delta \abs{\vect{\Phi}(t_n,t_m)}- K_L \il^{t_n}_{t_m} \abs{\vect{\Phi}(t_n,s)\vect{B}(s)}\cdot \norm{\vect{x}_s}ds\\
&\geq \delta \abs{\vect{\Phi}(t_n,t_m)}-\f{K\delta}{1-\alpha} \abs{\vect{\Phi}(t_n,0)} K_L \il^{t_n}_{t_m} \abs{\vect{\Phi}(0,s)\vect{B}(s)}ds.
\end{split}
\end{equation*}
Since $\abs{\vect{\Phi}(t_n,0)}\leq K$ and
\begin{equation*}
\begin{split}
\exp\ld-\il^{t_n}_{t_m}a_i(u)du\rd
&=\exp\ld-\il^{t_n}_0a_i(u)du\rd\cdot\exp\ld\il^{t_m}_0a_i(u)du\rd\geq \exp\{-2Q\},
\end{split}
\end{equation*}
then
$$\abs{\vect{x}(t_n)}\geq \delta \exp\{-2Q\}-\f{K\delta}{1-\alpha}  \cdot K  \cdot \f{(1-\alpha) \exp\{-2Q\}}{2K^2}=\f{1}{2}\delta \exp\{-2Q\}.$$
Hence, $\lim_{t\rightarrow+\infty} \vect{x}(t)\neq 0$. One has a contradiction. The proof is complete.

\newpage
\section*{Reference}

\newpage

\begin{table}[htp]\caption{Positive periodic solutions for system (\ref{eq1})}\label{tabledef}
\footnotesize
\begin{center}
\begin{tabular}{|c|c|c|c|}\hline
 $i_0$ & $i_{\infty}$ & $\l$ &  Number of periodic solutions \\\hline
 & & $ 0< \l < \f{1}{c_2 \chi}$ or $\lambda>\f{1}{\s \G c_1}$ & 0\\ \cline{3-4}
 & \raisebox{2ex}[0pt]{0} & $\f{1}{ \s \G} \f{1}{\max\{\vect{F}_0, \vect{F}_{\infty}\}} < \l < \f{1}{\chi} \f{1}{\min \{\vect{F}_0, \vect{F}_{\infty} \}}$ & 1\\\cline{2-4}
  &  & $0<\l <\f{1}{M(1)\chi}$ & 1\\\cline{3-4}
\raisebox{2ex}[0pt]{0} & \raisebox{2ex}[0pt]{1} & $\lambda>\f{1}{\s \G c_1}$ & 0 \\\cline{2-4}
  &  & $0<\l <\f{1}{M(1)\chi}$    & 2\\\cline{3-4}
  & \raisebox{2ex}[0pt]{2} & $\lambda>\f{1}{\s \G c_1}$ & 0 \\\cline{1-4}
  &   & $0< \l < \f{1}{c_2 \chi}$ & 0 \\\cline{3-4}
1 & \raisebox{2ex}[0pt]{0} & $\l>\f{1}{m(1)\G}$ & 1 \\\cline{2-4}
  & 1 &
 $\lambda>0$
 & 1 \\\cline{1-4}
  &   & $0< \l < \f{1}{c_2 \chi}$ & 0 \\\cline{3-4}
\raisebox{2ex}[0pt]{2} & \raisebox{2ex}[0pt]{0} & $\l>\f{1}{m(1)\G}$ & 2 \\\cline{1-4}
\end{tabular}\\
\label{existence}
\end{center}
\end{table}

\begin{figure}[htp]
\small
\centering
\includegraphics[height=2in,width=5in]{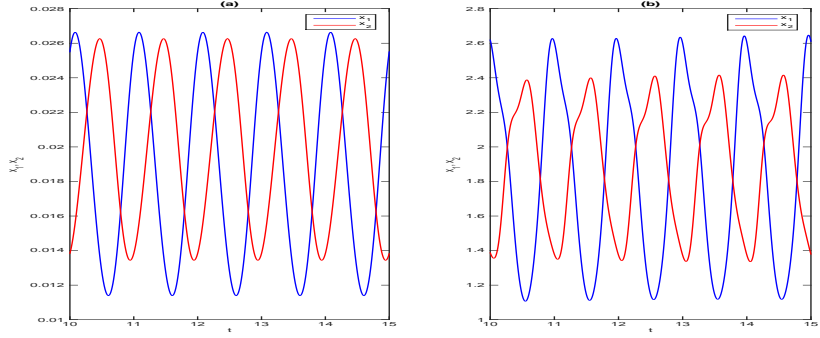}
\caption{\footnotesize  {(\ref{eqexam}) admits positive periodic solutions.
$(a) ~\lambda=0.1.$
$(b) ~\lambda=401.$
Here $\tau=0.1$.}}
\label{fkwy21fig}
\end{figure}

\begin{figure}[htp]
\small
\centering
\includegraphics[height=2in,width=5in]{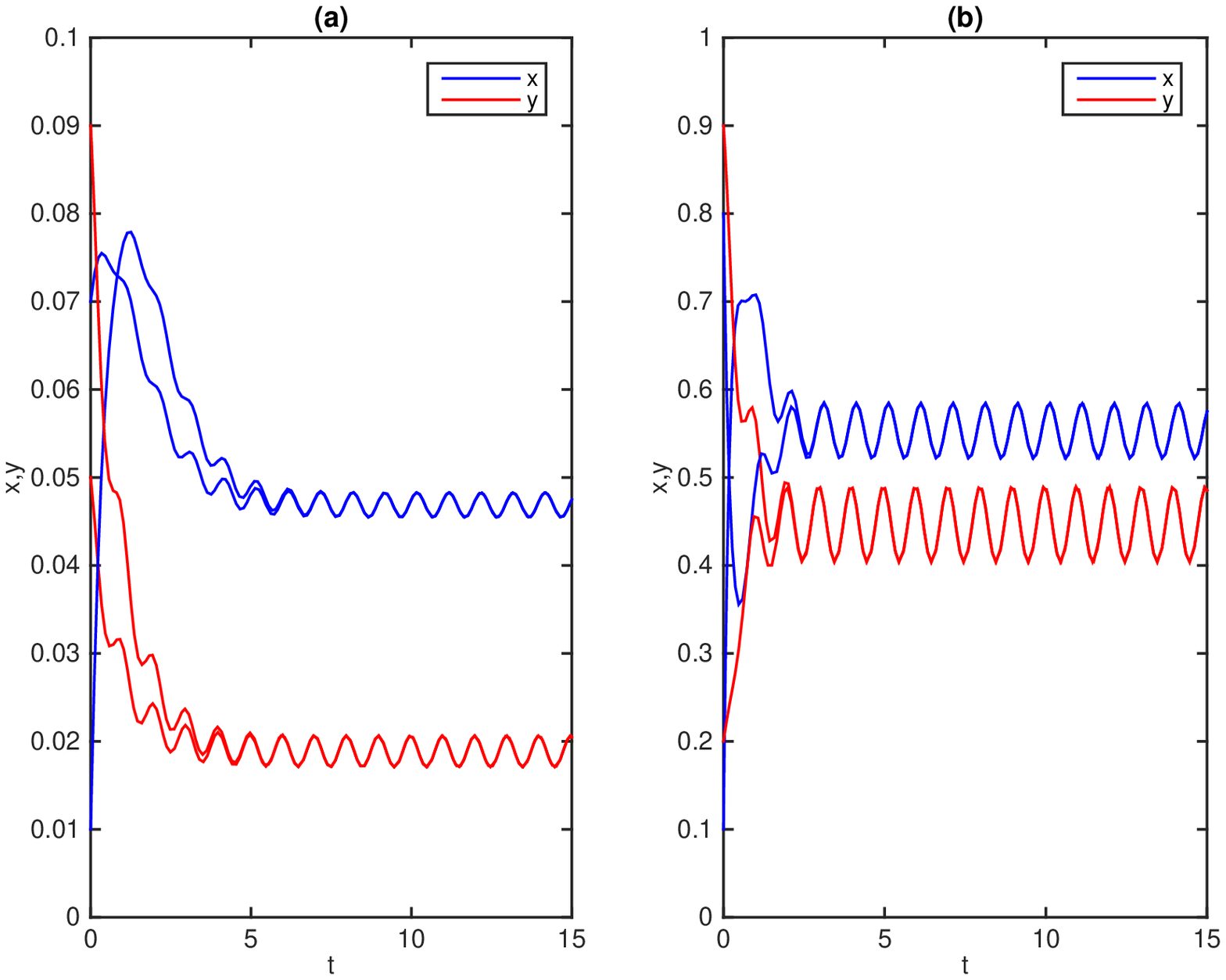}
\caption{ \footnotesize {(\ref{Model1}) with $f(y)=y^2/(\theta^2+y^2)$ admits a positive $1$-periodic solution being asymptotically stable. (a) The solutions of \eqref{Model1} starting at $(x(0),y(0)) = (0.07,0.05)$ and $(x(0),y(0)) = (0.01, 0.09)$ tend to the positive periodic solution. (b) The solutions of \eqref{Model1} starting at $(x(0),y(0)) = (0.1,0.9)$ and $(x(0),y(0)) = (0.8, 0.2)$ tend to the positive periodic solution. The values of parameters are same as those in Fig. \ref{figexp2}.}}
\label{figexp12pic}
\end{figure}

\begin{figure}[htp]
\small
\centering
\includegraphics[height=2in,width=5in]{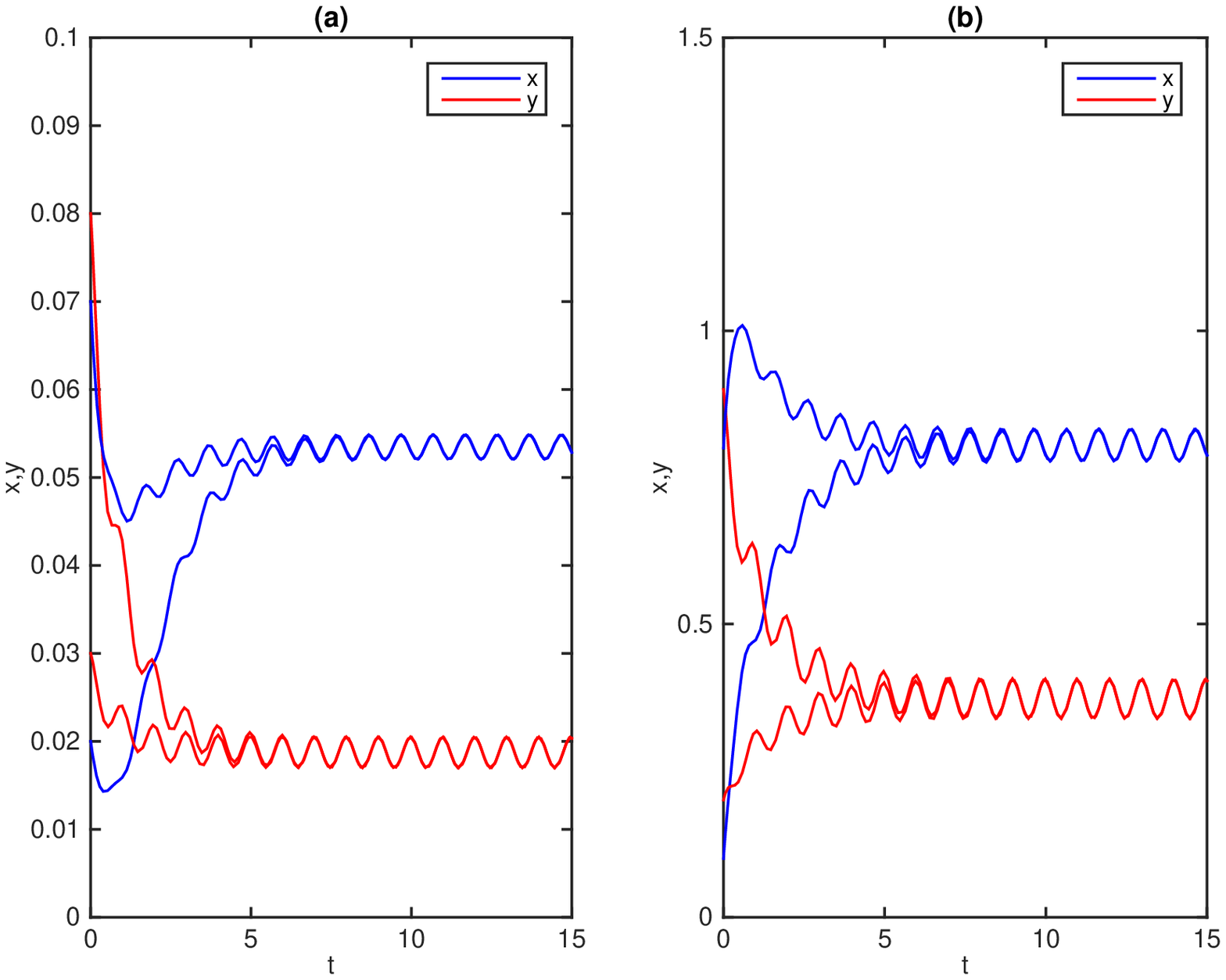}
\caption{ \footnotesize {(\ref{Model1}) with $f(y)=\theta^2/(\theta^2+y^2)$ admits a positive $1$-periodic solution being asymptotically stable.
(a) The solutions of \eqref{Model1} starting at $(x(0),y(0)) = (0.02,0.08)$ and $(x(0),y(0)) = (0.07, 0.03)$ tend to the positive periodic solution. (b) The solutions of \eqref{Model1} starting at $(x(0),y(0)) = (0.1,0.9)$ and $(x(0),y(0)) = (0.8, 0.2)$ tend to the positive periodic solution.
The values of parameters are same as those in Fig. \ref{figexp2}.}}
\label{figexp3}
\end{figure}

\begin{figure}[htp]
\small
\centering
\includegraphics[height=2in,width=5in]{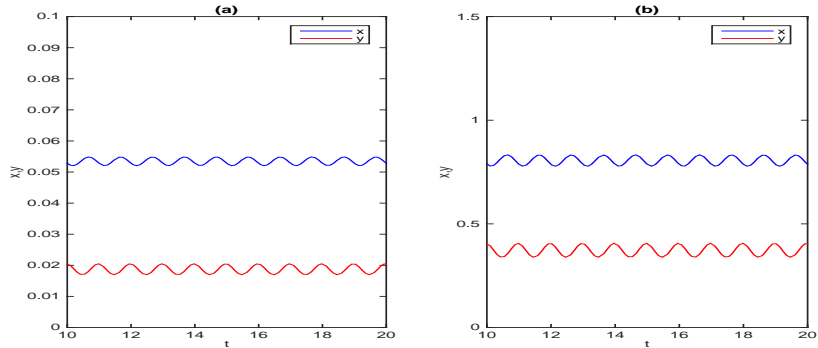}
\caption{ \footnotesize {\eqref{Model1} with $f(y)=\theta^2/(\theta^2+y^2)$ admits positive periodic solution. (a) $a=0.2, b=2, c=0.02, \theta=0.02$. (b) $a=5, b=4, c=1.2, \theta=0.5.$ Here $h(t)=1+0.6sin(2\pi t)$.}}
\label{figexp2}
\end{figure}
\begin{figure}[htp]
\small
\centering
\includegraphics[height=2in,width=5in]{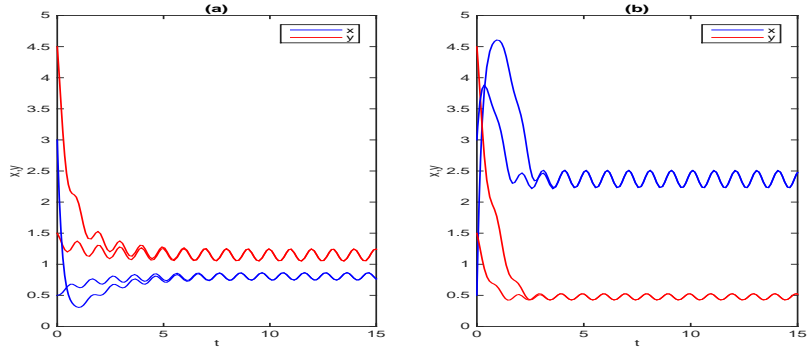}
\caption{ \footnotesize {\eqref{Model1} admits positive periodic solution being asymptotically stable for large $a$ and $c$, where the solutions of \eqref{Model1} starting at $(x(0),y(0)) = (0.5,4.5)$ and $(x(0),y(0)) = (3, 1.5)$ tend to the positive periodic solution. Here $e(t)=1+0.6sin(2\pi t)$, $a=20, b=4, c=8, \theta=0.5$. (a) $f(y)=\theta^2/(\theta^2+y^2)$; (b)  $f(y)=y^2/(\theta^2+y^2)$. }}
\label{figexp7}
\end{figure}

\begin{figure}[htp]
\small
\centering
\includegraphics[height=2in,width=5in]{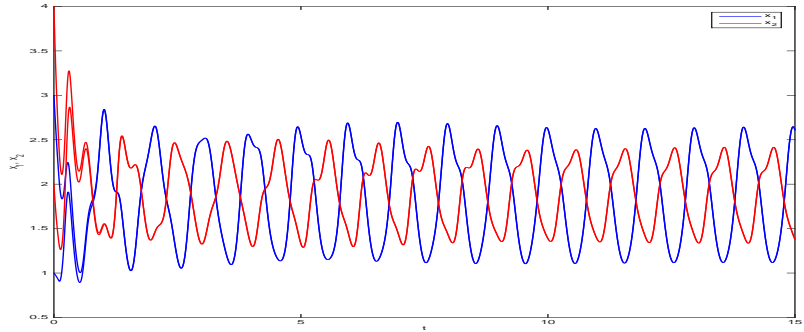}
\caption{\footnotesize  {\eqref{eqexam} has a positive periodic solution being asymptotically stable for large $\lambda$ not satisfying \eqref{conditioneq}. The solutions of \eqref{eqexam} starting at $(x_1(\theta), x_2(\theta))=(1, 4)$ and $(x_1(\theta), x_2(\theta))=(3, 2)$ for $\theta \in[-0.1,0]$ tends to the positive periodic solution. Here $\lambda=401, \tau=0.1$.}}
\label{fkwy23fig}
\end{figure}
\begin{figure}[htp]
\small
\centering
\includegraphics[height=2in,width=5in]{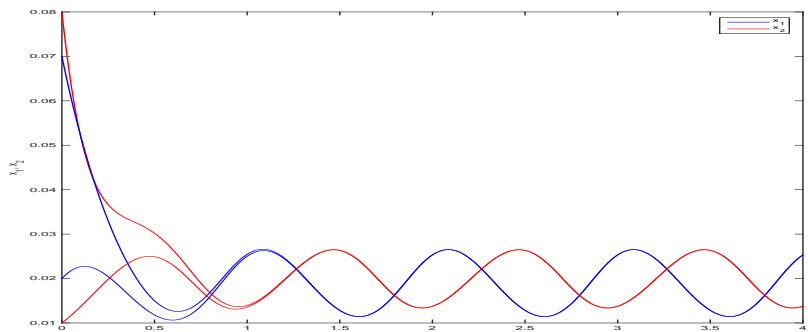}
\caption{\footnotesize {\eqref{eqexam} has a positive periodic solution being asymptotically stable for small $\lambda$ satisfying \eqref{conditioneq}. The solutions of \eqref{eqexam} starting at $(x_1(\theta), x_2(\theta))=(0.02, 0.08)$ and $(x_1(\theta), x_2(\theta))=(0.07, 0.01)$ for $\theta \in[-0.1,0]$ tends to the positive periodic solution. Here $\lambda=0.1, \tau=0.1$.}}
\label{fkwy22fig}
\end{figure}
\begin{figure}[htp]
\small
\centering
\includegraphics[height=3in,width=5in]{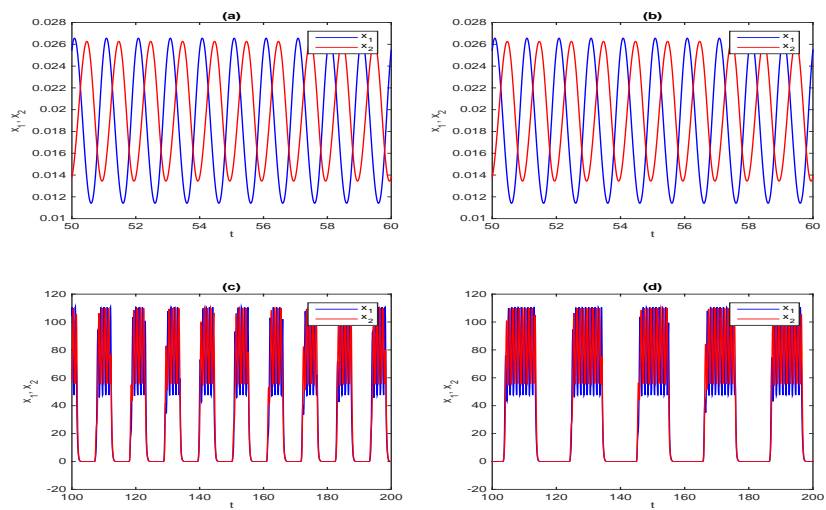}
\caption{\footnotesize {Effect of delay $\tau$ on the positive periodic solution of \eqref{eqexam}.   (a)  $\lambda=0.1, \tau=5.$ (b)$\lambda=0.1$, $\tau=10$. (c) $\lambda=401$, $\tau=5$. (d) $\lambda=401$, $\tau=10$.}}
\label{delayeffectexmp}
\end{figure}

\end{document}